\definecolor{webgreen}{rgb}{0,.5,0}
\definecolor{webbrown}{rgb}{.6,0,0}
\DeclareMathOperator{\Tr}{Tr}
\newcommand{\seqnum}[1]{\href{https://oeis.org/#1}{\rm \underline{#1}}}
\newcommand{\uncovered}{\ensuremath{\medcirc}}
\newcommand{\occupied}{\ensuremath{\color{black}\medbullet}}
\newcommand{\covered}{\ensuremath{\color{gray}\medbullet}}
\newcommand{\unc}{\uncovered}
\newcommand{\occ}{\occupied}
\newcommand*{\defeq}{\mathrel{\vcenter{\baselineskip0.5ex \lineskiplimit0pt
                     \hbox{\scriptsize.}\hbox{\scriptsize.}}}
                     =}
\begin{document}

\begin{center}
  \includegraphics[width=1in]{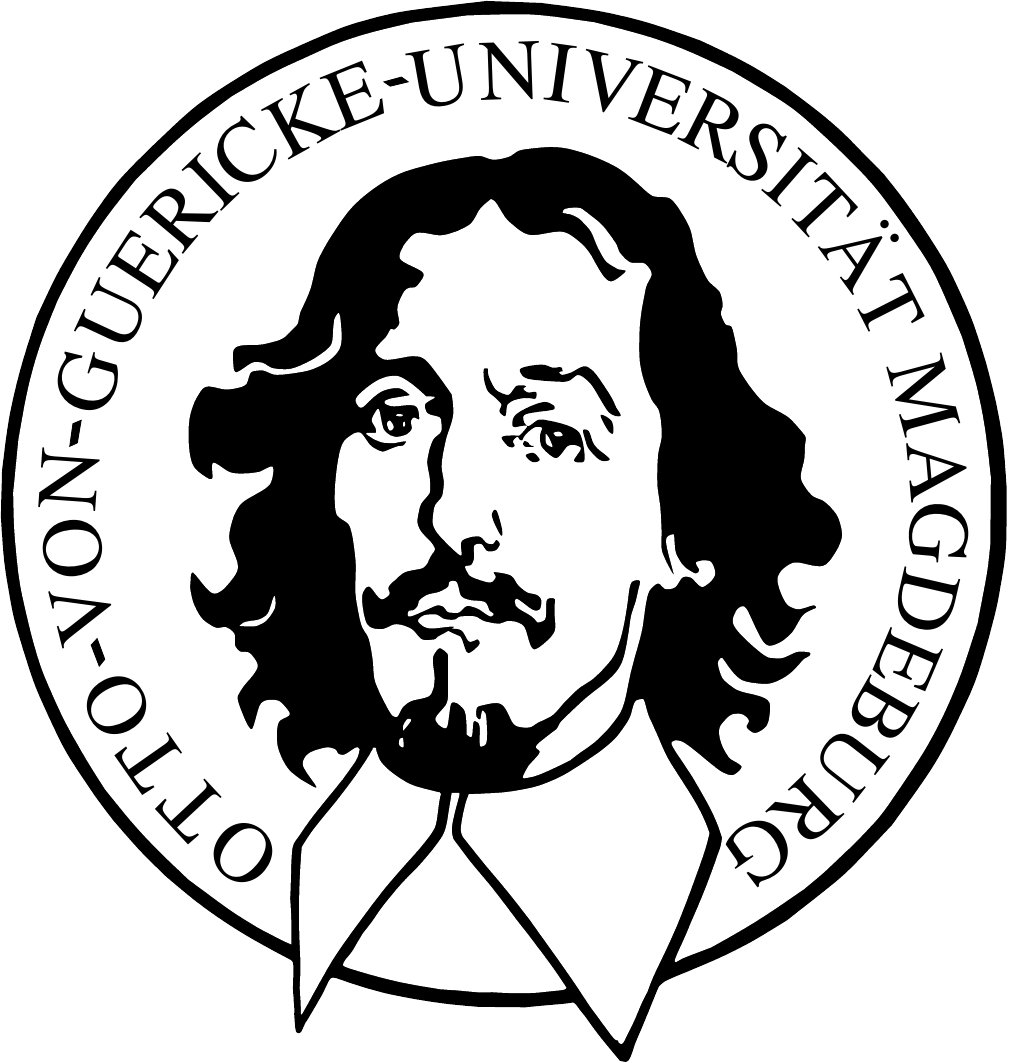} \hfill
  \includegraphics[width=1in]{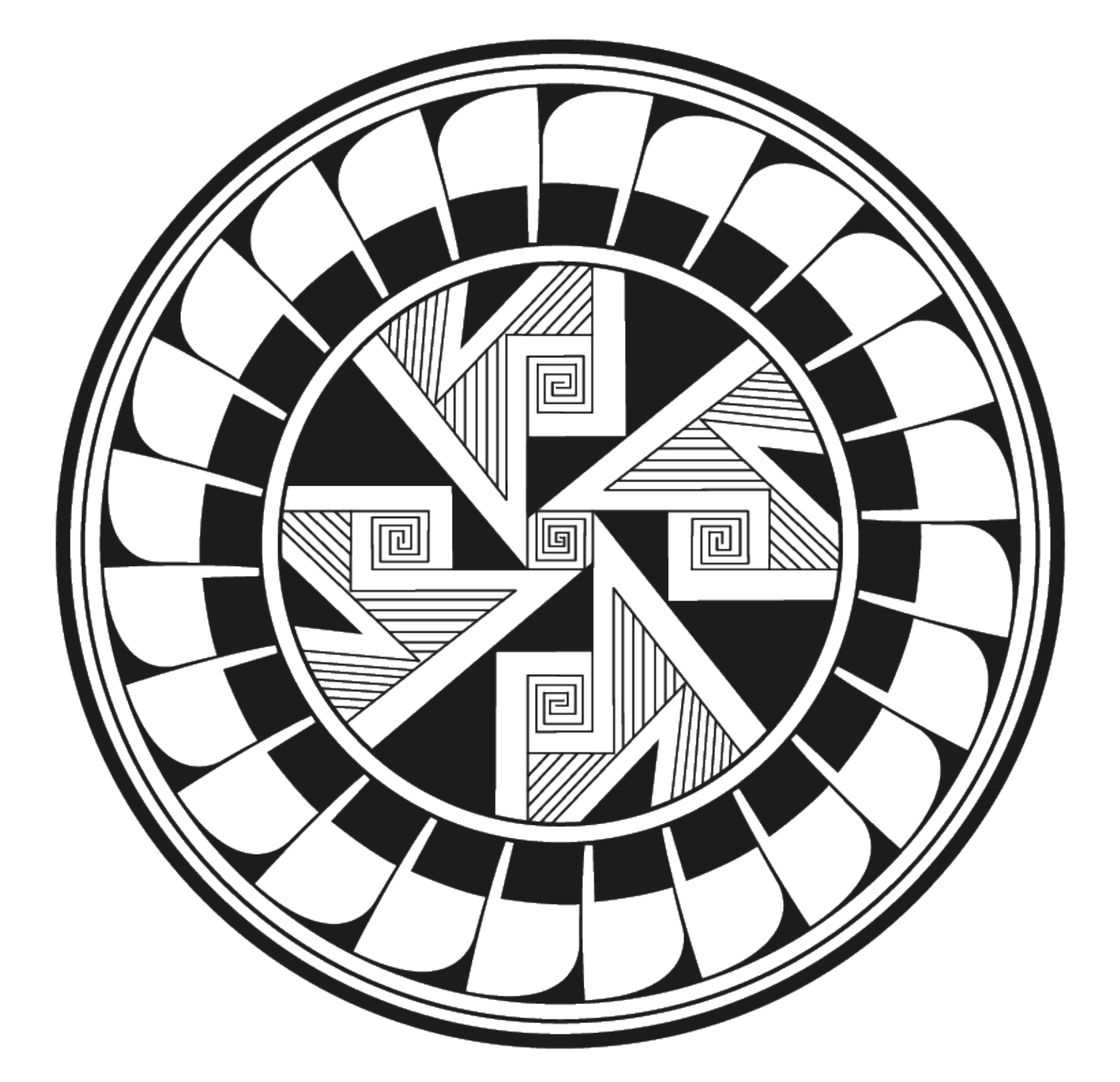}
\end{center}

\theoremstyle{plain}
\newtheorem{theorem}{Theorem}
\newtheorem{corollary}[theorem]{Corollary}
\newtheorem{lemma}[theorem]{Lemma}
\newtheorem{proposition}[theorem]{Proposition}

\theoremstyle{definition}
\newtheorem{definition}[theorem]{Definition}
\newtheorem{example}[theorem]{Example}
\newtheorem{conjecture}[theorem]{Conjecture}
\newtheorem{Observation}[theorem]{Observation}

\theoremstyle{remark}
\newtheorem{remark}[theorem]{Remark}

\begin{center}
\vskip 1cm{\LARGE\bf Domination Polynomials of 
  the Grid, the Cylinder, the Torus, and the King Graph
}
\vskip 1cm
\large
Stephan Mertens \\
{Institut f\"ur Physik \\
Otto-von-Guericke Universit\"at Magdeburg \\
Postfach 4120\\
39016 Magdeburg\\
Germany} \\
and \\
Santa Fe Institute\\
1399 Hyde Park Rd\\
Santa Fe, NM 87501\\
USA\\
\href{mailto:mertens@ovgu.de}{\texttt{mertens@ovgu.de}}
\end{center}

\vskip .2 in

\begin{abstract}
  We present an algorithm to compute the domination polynomial of the
  $m \times n$ grid, cylinder, and torus graphs and the king graph. The time
  complexity of the algorithm is $O(m^2n^2 \lambda^{2m})$ for the torus and 
  $O(m^3n^2\lambda^m)$ for the other graphs, where $\lambda = 1+\sqrt{2}$. 
  The space complexity is $O(mn\lambda^m)$ for all of these graphs. We use this 
  algorithm to compute domination polynomials for graphs up to
  size $24\times 24$ and the total number of dominating sets for even
  larger graphs. This allows us to give precise estimates of the
  asymptotic growth rates of the number of dominating sets. We
  also extend several sequences in the Online Encyclopedia of Integer
  Sequences. 
\end{abstract}

\section{Introduction}

A dominating set in a graph $G = (V , E)$ is a subset $S \subseteq V$
of vertices such that every node in $V$ is either an element of $S$ or
has a neighbor in $S$.

Domination is one of the most widely studied topics in graph
theory. According to Haynes, Hedetniemi, and Henning \cite{haynes:hedetniemi:henning:20}, more than
4000 papers on the subject were published by the year 2020.
Domination problems originated in the 19th century in chess
\cite{jaenisch:62, hedetniemi:hedetniemi:21}. A placement of chess pieces on a chessboard
is called dominating if each free square of the chessboard is under
attack by at least one piece. Figure~\ref{fig:kings-graph} shows 
$9$ kings dominating the $8\times 8$ chessboard.

\begin{figure}
\begin{tabular}{p{0.5\textwidth}p{0.5\textwidth}}  
  \begin{center}
    \fenboard{8/1k2k2k/8/8/1k2k2k/8/8/1k2k2k - - - 0 0}
    $$\showboard$$
  \end{center} &
   \vspace{4mm}
   \begin{center}
     \includegraphics[width=0.353\textwidth]{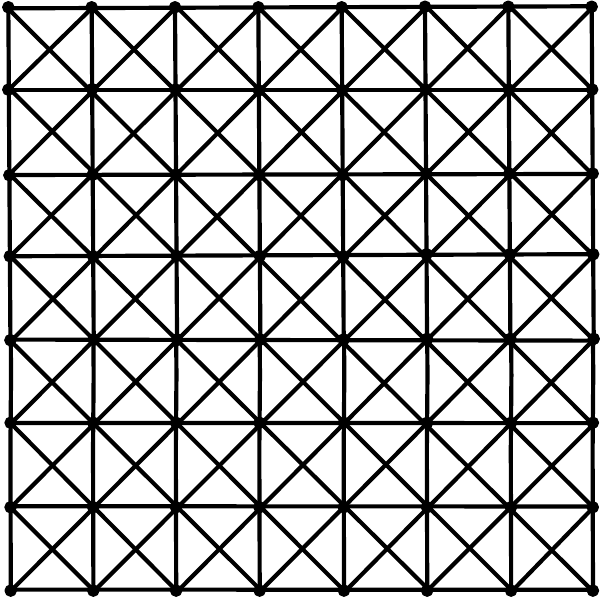}
  \end{center}\\[-6ex]
\end{tabular}
\caption{Nine kings are required to dominate the
  chessboard, or equivalently the $8\times 8$ king graph. }
\label{fig:kings-graph}
\end{figure}

The link from chess to graph theory is given by graphs like the king
graph (Figure~\ref{fig:kings-graph}).  In this graph, the vertices
represent the squares of the board, and each edge represents a legal
move of a king. Obviously a dominating placement of kings on the board
corresponds to a dominating set of the king graph. Graphs for 
other chess pieces can be defined analogously. 

\begin{figure}
  \centering
  \includegraphics[width=0.9\linewidth]{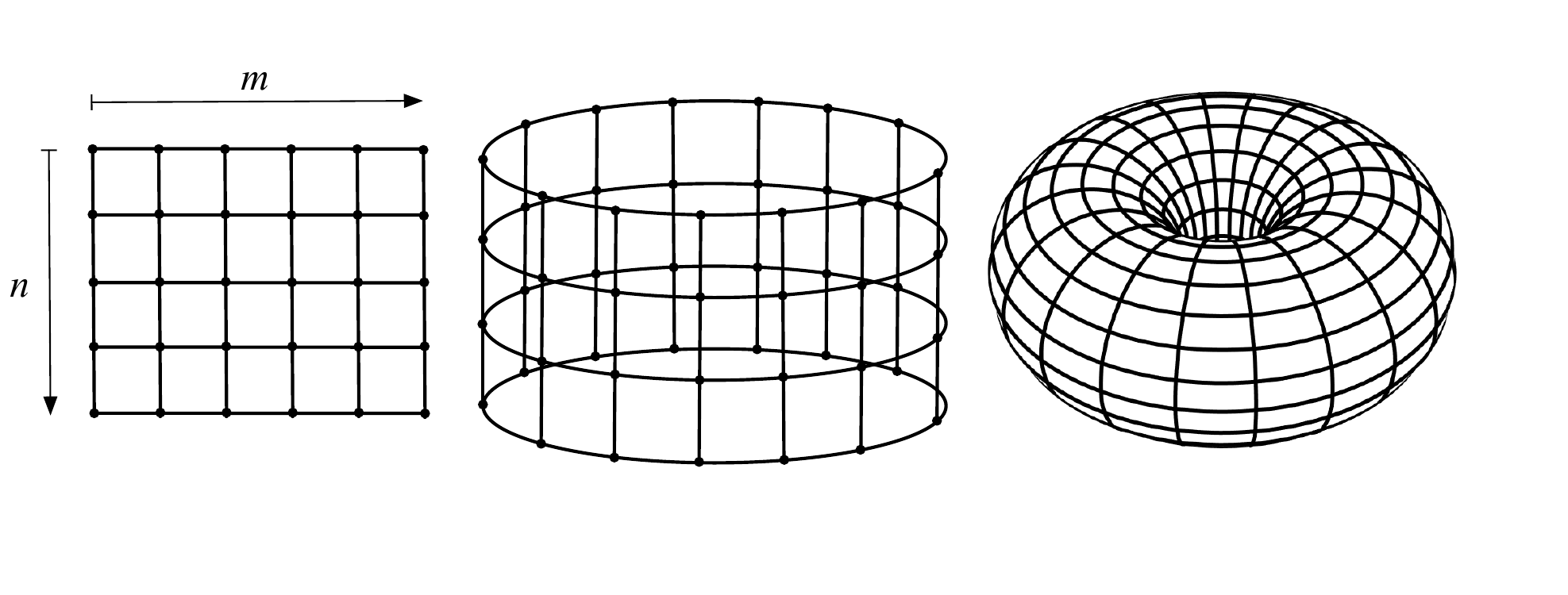}
  \caption{Examples of an $m\times n$ grid, cylinder
    and torus graph.}
  \label{fig:grid-cylinder-graph}
\end{figure}

In this contribution, we  study domination in some related families of graphs: 
the $m\times n$ grid, cylinder, and torus graph
(Figure~\ref{fig:grid-cylinder-graph}). If $P_n$ denotes the path graph
of $n$ vertices, the grid graph $G_{m\times n}$ is the Cartesian graph product
$G_{m\times n} = P_m \square P_n$. The cylinder graph is
$G_{\overline{m}\times n} = C_m \square P_n$, where $C_m$
is the cycle graph: we use the overbar $\overline{m}$ 
to indicate the cyclic index. The $m\times n$ torus graph is
$G_{\overline{m}\times\overline{n}} =C_m \square C_n$.
We also study domination in the $m\times n$ king graph
$K_{m\times n} = P_m \boxtimes P_n$, the strong graph product
of two path graphs $P_m$ and $P_n$. 

Our goal is to compute the \emph{domination polynomial} of these graphs. 
The domination polynomial of a graph $G$ is the
generating function of its dominating sets with respect to their size, i.e.,
\begin{equation}
  \label{eq:def_domination_polynomial}
  D_G(z) = \sum_{S\subseteq V}  z^{|S|}\,,
\end{equation}
where the sum runs over all dominating sets in $G$. 
Like other graph polynomials, the domination polynomial encodes many
interesting properties of a graph
\cite{akbari:alikhani:peng:10}.  For example, the lowest power of $z$ appearing in 
$D_G(z)$ is the size of the smallest dominating set in $G$, which is 
known as the \emph{domination number} $\gamma(G)$. 
A considerable fraction of the 4000 papers 
mentioned above are studies of $\gamma(G)$ for various graphs. 

A closed form for the domination polynomial is known only for a few families of simple graphs like 
complete graphs, path and cycle graphs, wheel graphs, and star graphs 
\cite{alikhani:peng:09, alikhani:peng:08,alikhani:peng:14}. Recently, rook graphs on $m \times n$ chessboards, 
i.e., Cartesian products of two complete graphs $K_m \square K_n$,
were added to this list \cite{mertens:24a}. As far as we
know, the rook is the first chess piece for which this has been achieved.

Thus for most graphs, the domination polynomial can only be computed numerically. 
However, this approach is challenging because of its computational
complexity.  In theoretical computer science, the decision problem \textsc{Dominating Set} 
asks, given a graph $G$ and an integer $k$, whether $G$ has a dominating set of size at most
$k$, i.e., whether $\gamma(G) \le k$. This problem is NP-complete, which 
can be shown by reduction from the \textsc{Vertex Cover} problem \cite{garey:johnson:79}. 
Unless $\mathrm{P}=\mathrm{NP}$, this means that no polynomial-time algorithm exists 
to compute $\gamma(G)$. Indeed, the fastest known algorithm to find the dominating set of minimum
size for general graphs $G=(V,E)$ has time complexity
$O(1.4969^{|V|})$ \cite{van-rooij:bodlaender:11}. 

The grid graph is planar and bipartite, and both of these properties often allow
polynomial time algorithms for problems that are NP-complete for
general graphs \cite{NOC}. But not in this case: \textsc{Dominating
  Set} is NP-complete even on subgraphs of the $m\times n$ grid
\cite{clark:colbourn:johnson:90}.  
This suggests that we should not expect to find a polynomial-time algorithm that computes $\gamma(G)$ 
for the grid graph or its relatives, let alone one that computes the entire polynomial $D_G(z)$. 
But we can try to reduce the exponential running time as much as possible. 
This is the main goal of this work. 

The paper is organized as follows. We begin by proving in Section \ref{sec:transfermatrix} that
the domination polynomial of the $m\times n$ grid, cylinder, and torus 
can be expressed in terms of the $n$th power of an $a \times a$ matrix $A$, the
``transfer matrix'', where $a=O(\lambda^m)$ with
$\lambda = 1+\sqrt{2}=2.4142\ldots$\  As we show in Section \ref{sec:algorithm}, 
this gives rise to an algorithm for the domination polynomial with time complexity
$O(m^3n^2\lambda^m)$ for the grid and cylinder, and a closely related algorithm with time complexity
$O(m^3n^2\lambda^{2m})$ for the torus. 
We also explain how this algorithm can be adapted to compute the 
domination polynomial of the king graph, with the same time and space
complexity.
Since the running times of these algorithms are exponential in the width $m$ as opposed to the 
number of vertices $|V|=mn$, they represent a considerable improvement over the algorithm of \cite{van-rooij:bodlaender:11}.

 In Sections \ref{sec:results} and \ref{sec:growthrate} we discuss some numerical results obtained by our algorithm. 
 In particular, we estimate the asymptotic growth rate of the total
 number of dominating sets for all these graphs. We give 
 Conclusions in Section \ref{sec:conclusions} and provide some combinatorial proofs and domination polynomials in 
 the appendices.

\section{The transfer matrix}
\label{sec:transfermatrix}

The idea of the transfer matrix approach is to compute the domination
polynomial of a grid or cylinder row by row. We do this by defining legal transitions 
from one row to the next, identifying which vertices in each row are in the dominating set $S$. 
Consider the following definitions, where we borrow some wording from domination problems in chess.

\begin{definition}
  Given a graph $G=(V,E)$ and a set $S \subseteq V$, we say a vertex $v \in V$ is
   \emph{occupied} if $v \in S$, \emph{covered} if $v \notin S$ but some neighbor of $v$ is in $S$, 
   and \emph{uncovered} if $v \notin S$ and no neighbor of $v$ is in $S$. 
\end{definition}

Clearly every vertex is either occupied, covered, or uncovered. As we construct $S$, some vertices 
in the current row may be uncovered, because they will become covered by a neighboring occupied vertex in the next row.
This gives us the following definition.

\begin{definition}
  \label{def:almost_dominating}
 If $G$ is the grid graph $G_{m\times n}$ or the cylinder graph $G_{\overline{m}\times n}$, 
   we say $S$ is \emph{almost dominating} if every vertex 
  in the subgraph $G_{m\times (n-1)}$ (resp., $G_{\overline{m}\times (n-1)}$) 
 consisting of the first $n-1$ rows are occupied or covered.
\end{definition}

We label vertices according to their state, namely \occupied\ (occupied), \covered\ (covered), and \uncovered\ (uncovered). 
Given an almost dominating set $S$, we define its \emph{signature} $\sigma$ as the string of length $m$ over the alphabet $\{\uncovered,\covered,\occupied\}$ that identifies the states of the vertices in the $n$th row. However, not all such strings can occur: since the neighbors of an occupied vertex are covered, the symbols \occupied\ and \uncovered\ cannot be adjacent. 
\begin{definition}
  \label{def:signature}
  A \emph{signature} of length $m$ is a string $\sigma$ of
  length $m$ over the alphabet $\{\uncovered,\covered,\occupied\}$ which does not contain either of
  the substrings (\uncovered,\occupied) and (\occupied,\uncovered). A \emph{cyclic signature} is one where 
  this substring constraint also applies to the pair $(\sigma_1,\sigma_m)$.
\end{definition}
\noindent
Signatures apply to the grid, and cyclic signatures apply to the cylinder.

The time and space complexity of our algorithms depend on the number of signatures or cyclic signatures. 
The number $a(m)$ of signatures is given by
\begin{align}
    \label{eq:a_signatures}
    a(m) &= \frac{1}{2}\left(1-\sqrt{2}\right)^{m+1} +
           \frac{1}{2}\left(1+\sqrt{2}\right)^{m+1}\\
           &= 3, 7, 17, 41, 99, 239, 577, 1393, 3363, 8119, \ldots \nonumber
\end{align}
This sequence has two entries in the OEIS, \seqnum{A001333} and
\seqnum{A078057}, differing only in their offset. Since cyclic signatures are constrained at one more pair, the total number $\overline{a}(m)$ of cyclic signatures is smaller than $a(m)$ for $m \ge 3$, although with the same asymptotic growth rate:
\begin{align}
  \label{eq:c_signatures}
  \overline{a}(m) &= 1 + \left(1-\sqrt{2}\right)^m + \left(1+\sqrt{2}\right)^m \\
         &= 3, 7, 15, 35, 83, 199, 479, 1155, 2787, 6727, \ldots \nonumber
\end{align}
This is \seqnum{A124696} in the OEIS. We derive the formulas for
$a(m)$ and $\overline{a}(m)$ in the Appendix.

Slightly abusing notation, we write $G_{m\times n}(z)$ for the
domination polynomial of the grid $G_{m\times n}$ and
$G_{\overline{m}\times n}(z)$ for the domination polynomial of the
cylinder $G_{\overline{m}\times n}$. We also write 
$G_{m\times n}^{\sigma}(z)$  and $G_{\overline{m}\times n}^{\sigma}(z)$ for 
the generating functions of almost dominating sets on $G_{m\times n}$
(resp., $G_{\overline{m}\times n}$) with signature $\sigma$. 
The connection between these dominating polynomials and almost-dominating polynomials is then given by 
the following lemma.
\begin{lemma}
  \label{lem:P_sigma}
  Let $\unc(\sigma)$ denote the number of uncovered
  vertices in $\sigma$. Then
  \begin{equation}
    \label{eq:almost-full}
    G_{m\times n}(z) = \sum_{\sigma\, :\, \unc(\sigma)=0} G_{m\times
                       n}^\sigma(z)\,,\qquad
    G_{\overline{m}\times n}(z) = \sum_{\sigma\, :\, \unc(\sigma)=0} G_{\overline{m}\times
                       n}^\sigma(z)\,,
  \end{equation}
  where the sum for the grid (resp., the cylinder) runs over all signatures (resp., cyclic signatures). 
\end{lemma}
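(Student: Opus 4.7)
The plan is to derive (\ref{eq:almost-full}) as a direct consequence of the definitions. First I would observe that every subset $S \subseteq V$ determines a well-defined state (occupied, covered, or uncovered) at each vertex, and hence a unique string $\sigma(S) \in \{\unc,\covered,\occ\}^m$ recording the states of the $m$ vertices in row $n$. I would then check that $\sigma(S)$ is automatically a signature in the sense of Definition \ref{def:signature}: if two vertices in row $n$ are adjacent and one of them is occupied, then the other, being a neighbor of a vertex in $S$, is either in $S$ itself or covered, so the forbidden patterns $(\unc,\occ)$ and $(\occ,\unc)$ never arise. For the cylinder, row $n$ forms a cycle $C_m$, so the same argument also rules out the forbidden pattern between positions $1$ and $m$, giving a cyclic signature.

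Next I would establish the key equivalence: $S$ is a dominating set of $G_{m\times n}$ (or of $G_{\overline{m}\times n}$) if and only if $S$ is almost dominating and $\unc(\sigma(S))=0$. The forward direction is immediate, since a dominating set has every vertex occupied or covered, which in particular means rows $1,\ldots,n-1$ have no uncovered vertex (almost dominating by Definition \ref{def:almost_dominating}) and row $n$ has none either (so $\unc(\sigma(S))=0$). For the converse, almost dominating handles rows $1,\ldots,n-1$, and the condition $\unc(\sigma(S))=0$ handles row $n$, exhausting all vertices.

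Finally I would assemble the identity. The set of almost dominating subsets partitions according to signature, because each $S$ has exactly one $\sigma(S)$; by definition the contribution of the $\sigma$-block to the generating function is $G_{m\times n}^\sigma(z)$ (resp., $G_{\overline{m}\times n}^\sigma(z)$). Restricting to signatures with $\unc(\sigma)=0$ picks out exactly the dominating sets by the equivalence above, and summing $z^{|S|}$ over these sets produces $D_G(z)$, which in the paper's notation is $G_{m\times n}(z)$ or $G_{\overline{m}\times n}(z)$. This gives both formulas in (\ref{eq:almost-full}).

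I do not anticipate any real obstacle: the lemma is essentially a bookkeeping statement, and the only point that needs explicit verification is that $\sigma(S)$ always satisfies the adjacency constraint — a one-line consequence of the definition of ``covered'', with the cyclic version following from the extra wraparound edge in $C_m$.
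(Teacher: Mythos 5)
Your proposal is correct and is essentially an expanded version of the paper's own one-sentence proof: the dominating sets are exactly the almost dominating sets whose $n$th-row signature has no uncovered vertices, so summing $G^\sigma$ over signatures with $\unc(\sigma)=0$ gives $D_G(z)$. The extra checks you include (that $\sigma(S)$ is always a valid signature, and the partition of almost dominating sets by signature) are sound bookkeeping that the paper leaves implicit.
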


\begin{proof}
  The dominating sets of $G_{m\times n}$ and $G_{\overline{m}\times n}$ consist of the 
  almost dominating sets which are in fact dominating, i.e., where there are no uncovered vertices in the $n$th row.
\end{proof}

Now we have all the ingredients to implement the idea of constructing dominating 
sets row by row. Consider an almost dominating set in an $m\times n$ grid or cylinder with signature
$\sigma$, and consider adding an $(n+1)$st row with signature $\tau$. Only certain pairs $\sigma, \tau$ 
are compatible. Wherever $\sigma$ has an uncovered vertex, its neighbor in $\tau$ must be occupied. 
Similarly, wherever $\sigma$ is occupied, its neighbor in $\tau$ is occupied or covered by definition. 
Finally, a vertex in $\tau$ cannot be covered unless it has an occupied neighbor, either above it in $\sigma$ 
or to either side in $\tau$. Thus the new signature $\tau$ must be compatible with the
previous signature $\sigma$ according to the following definition.
\begin{definition}
\label{def:compatible}
  A (cyclic) signature $\tau=(\tau_1,\ldots,\tau_m)$ is 
  \emph{compatible} with a (cyclic) signature
  $\sigma=(\sigma_1,\ldots,\sigma_m)$ if, for all $i=1,\ldots,m$,
  \begin{equation}
  \begin{aligned}
    \label{eq:def-compatible}
    \sigma_i=\uncovered &\,\,\Longrightarrow\,\, \tau = \occupied \,,\\
    \sigma_i=\occupied &\,\,\Longrightarrow\,\, \tau_i \in \{\covered, \occupied\} \,,\\
    \tau_i=\covered &\,\,\Longrightarrow\,\,(\sigma_i = \occupied) 
    \;\text{or}\; (\tau_{i-1}=\occupied) \;\text{or}\; (\tau_{i+1} = \occupied) \,.
  \end{aligned}
\end{equation}
In the last equation, we compute the indices $i \pm 1 \bmod m$ for cyclic signatures, and ignore $\tau_0$ and $\tau_{m+1}$ in the non-cyclic case.
\end{definition}

Finally, we define the transfer matrices $A$ and $\overline{A}$,
whose rows and columns are indexed by (cyclic) signatures.
\begin{definition}
  Let $\occ(\sigma)$ denote the number of occupied vertices in a (cyclic) signature $\sigma$. 
  For a given integer $m \ge 0$, the transfer matrix $A = (A_{\tau,\sigma})$ 
  is defined as
  \begin{equation}
    \label{eq:transfer-matrix}
    A_{\tau,\sigma} = \begin{cases}
      z^{\occ(\tau)} & \text{if $\tau$ is compatible with $\sigma$} \\
      0 & \text{otherwise,}
    \end{cases}
  \end{equation}
  where $\tau$ and $\sigma$ range over all signatures of length $m$. 
  The transfer matrix $\overline{A} = (\overline{A}_{\tau,\sigma})$ is defined similarly with $\tau$ and $\sigma$ ranging over cyclic signatures of length $m$.
  \label{def:transfer-matrix}
\end{definition}

\begin{table}
  \centering
  {
 \begin{tabular}{c|ccccccc}
 & \uncovered \uncovered  & \uncovered \covered  & \occupied\occupied & \occupied\covered  & \covered \uncovered  & \covered \occupied & \covered \covered \\\hline
\uncovered \uncovered  & 0   & 0   & 0   & 0   & 0   & 0   &  1 \\
\uncovered \covered  & 0   & 0   & 0   & 0   & 0   &  1  & 0  \\
\occupied\occupied &  $z^2$ &  $z^2$ &  $z^2$ &  $z^2$ &  $z^2$ &  $z^2$ &  $z^2$\\
\occupied\covered  & 0   &  $z$  &  $z$  &  $z$  & 0   &  $z$  &  $z$ \\
\covered \uncovered  & 0   & 0   & 0   &  1  & 0   & 0   & 0  \\
\covered \occupied & 0   & 0   &  $z$  &  $z$  &  $z$  &  $z$  &  $z$ \\
\covered \covered  & 0   & 0   &  1  & 0   & 0   & 0   & 0  \\
 \end{tabular}
}
\caption{The transfer matrix $A$ for domination on grids of width $m=2$. Rows and columns are indexed by the new and old signatures $\tau$ and $\sigma$ respectively.}
\label{tab:transfermatrix}
\end{table}

\begin{table}
  \centering
  {
    \setlength{\tabcolsep}{2pt} 
 \begin{tabular}{c|ccccccccccccccc}
 & \uncovered \uncovered \uncovered  & \uncovered \uncovered \covered  & \uncovered \covered \uncovered  & \uncovered \covered \covered  & \occupied\occupied\occupied & \occupied\occupied\covered  & \occupied\covered \occupied & \occupied\covered \covered  & \covered \uncovered \uncovered  & \covered \uncovered \covered  & \covered \occupied\occupied & \covered \occupied\covered  & \covered \covered \uncovered  & \covered \covered \occupied & \covered \covered \covered \\\hline
\uncovered \uncovered \uncovered  & 0   & 0   & 0   & 0   & 0   & 0   & 0   & 0   & 0   & 0   & 0   & 0   & 0   & 0   &  1 \\
\uncovered \uncovered \covered  & 0   & 0   & 0   & 0   & 0   & 0   & 0   & 0   & 0   & 0   & 0   & 0   & 0   &  1  & 0  \\
\uncovered \covered \uncovered  & 0   & 0   & 0   & 0   & 0   & 0   & 0   & 0   & 0   & 0   & 0   &  1  & 0   & 0   & 0  \\
\uncovered \covered \covered  & 0   & 0   & 0   & 0   & 0   & 0   & 0   & 0   & 0   & 0   &  1  & 0   & 0   & 0   & 0  \\
\occupied\occupied\occupied &  $z^3$ &  $z^3$ &  $z^3$ &  $z^3$ &  $z^3$ &  $z^3$ &  $z^3$ &  $z^3$ &  $z^3$ &  $z^3$ &  $z^3$ &  $z^3$ &  $z^3$ &  $z^3$ &  $z^3$\\
\occupied\occupied\covered  & 0   &  $z^2$ & 0   &  $z^2$ &  $z^2$ &  $z^2$ &  $z^2$ &  $z^2$ & 0   &  $z^2$ &  $z^2$ &  $z^2$ & 0   &  $z^2$ &  $z^2$\\
\occupied\covered \occupied & 0   & 0   &  $z^2$ &  $z^2$ &  $z^2$ &  $z^2$ &  $z^2$ &  $z^2$ & 0   & 0   &  $z^2$ &  $z^2$ &  $z^2$ &  $z^2$ &  $z^2$\\
\occupied\covered \covered  & 0   & 0   & 0   &  $z$  &  $z$  &  $z$  &  $z$  &  $z$  & 0   & 0   &  $z$  &  $z$  & 0   &  $z$  &  $z$ \\
\covered \uncovered \uncovered  & 0   & 0   & 0   & 0   & 0   & 0   & 0   &  1  & 0   & 0   & 0   & 0   & 0   & 0   & 0  \\
\covered \uncovered \covered  & 0   & 0   & 0   & 0   & 0   & 0   &  1  & 0   & 0   & 0   & 0   & 0   & 0   & 0   & 0  \\
\covered \occupied\occupied & 0   & 0   & 0   & 0   &  $z^2$ &  $z^2$ &  $z^2$ &  $z^2$ &  $z^2$ &  $z^2$ &  $z^2$ &  $z^2$ &  $z^2$ &  $z^2$ &  $z^2$\\
\covered \occupied\covered  & 0   & 0   & 0   & 0   &  $z$  &  $z$  &  $z$  &  $z$  & 0   &  $z$  &  $z$  &  $z$  & 0   &  $z$  &  $z$ \\
\covered \covered \uncovered  & 0   & 0   & 0   & 0   & 0   &  1  & 0   & 0   & 0   & 0   & 0   & 0   & 0   & 0   & 0  \\
\covered \covered \occupied & 0   & 0   & 0   & 0   &  $z$  &  $z$  &  $z$  &  $z$  & 0   & 0   &  $z$  &  $z$  &  $z$  &  $z$  &  $z$ \\
\covered \covered \covered  & 0   & 0   & 0   & 0   &  1  & 0   & 0   & 0   & 0   & 0   & 0   & 0   & 0   & 0   & 0  \\
\end{tabular}
}
\caption{The transfer matrix $\overline{A}$ for domination on the cylinder of width $m=3$. Rows and columns are indexed by the new and old signatures $\tau$ and $\sigma$ respectively.}
\label{tab:cyltransfermatrix}
\end{table}

Since the width $m$ of the graph is usually clear from context, we suppress the dependence of $A$ on $m$ in our notation for the most part. Tables \ref{tab:transfermatrix} and \ref{tab:cyltransfermatrix} show the transfer matrices for grids of
width $m=2$ and cylinders of width $m=3$. 

The next two theorems are our key results.

\begin{theorem}
  \label{the:G-transfer}
  Let $\sigma_{\covered}$ be the (cyclic) signature which is covered everywhere,
  $\sigma_{\covered} = (\covered,\covered,\ldots,\covered)$. Then the domination polynomials
  $G_{m\times n}(z)$ and $G_{\overline{m}\times n}(z)$ can be computed
  as
  \begin{subequations}
    \label{eq:domination-transfer}
    \begin{equation}
      \label{eq:G-transfer-grid}
    G_{m\times n}(z) = \sum_{\sigma:\unc(\sigma)=0} (A^n)_{\sigma,\sigma_{\covered}}\,,
  \end{equation}
  \begin{equation}
    \label{eq:G-transfer-cyl}
    G_{\overline{m}\times n}(z) = \sum_{\sigma:\unc(\sigma)=0} (\overline{A}^n)_{\sigma,\sigma_{\covered}}\,.
  \end{equation}
  \end{subequations}
\end{theorem}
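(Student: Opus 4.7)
The plan is to reduce to showing, for every signature $\sigma$, that $G_{m\times n}^\sigma(z) = (A^n)_{\sigma,\sigma_\covered}$, and the analogous identity for cyclic signatures with $\overline{A}$. Summing over $\sigma$ with $\unc(\sigma)=0$ and applying Lemma~\ref{lem:P_sigma} then yields the theorem. I would prove this identity by induction on $n$, treating $\sigma_\covered$ as the signature of a ``virtual row $0$'' that initializes the recursion.

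For the base case $n=1$, the single-row graph $G_{m\times 1}$ has no earlier rows to cover, so an almost-dominating set is an arbitrary subset, but its signature $\sigma$ must honestly record each vertex's state. A vertex with $\sigma_i=\covered$ must have an occupied neighbor, which in a single row means $\sigma_{i-1}=\occupied$ or $\sigma_{i+1}=\occupied$; a vertex with $\sigma_i=\uncovered$ has no occupied neighbor in the row, which is automatic since $\sigma$ is a valid signature (Definition~\ref{def:signature}). The first two clauses of Definition~\ref{def:compatible} are vacuous when the old signature is $\sigma_\covered$, and the third reduces to exactly the covered-vertex condition just described. Hence $G_{m\times 1}^\sigma(z) = z^{\occ(\sigma)} = A_{\sigma,\sigma_\covered}$ when the latter is nonzero, and both vanish otherwise.

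For the inductive step, I would exhibit a bijection between almost-dominating sets $S'$ on $G_{m\times(n+1)}$ with signature $\tau$ and pairs $(S,\tau)$ in which $S$ is an almost-dominating set on $G_{m\times n}$ with some signature $\sigma$, and $\tau$ is compatible with $\sigma$. The map restricts $S'$ to its first $n$ rows and reads off $\sigma$ as the signature of row $n$ computed using only its neighbors in rows $n-1$ and $n$. The three clauses of Definition~\ref{def:compatible} translate respectively into: every $\uncovered$ in $\sigma$ must be covered from above by an $\occupied$ in $\tau$, so that the first $n$ rows of $S'$ are entirely occupied or covered; every $\occupied$ in $\sigma$ forces the vertex above it in $\tau$ to be at least covered; and every $\covered$ in $\tau$ requires an occupied neighbor either above in $\sigma$ or beside in $\tau$. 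Since $|S'|=|S|+\occ(\tau)$, this yields
\begin{equation*}
G_{m\times(n+1)}^\tau(z) \;=\; z^{\occ(\tau)}\!\!\sum_{\sigma\,:\,\tau\text{ compatible with }\sigma}\!\! G_{m\times n}^\sigma(z) \;=\; \sum_{\sigma} A_{\tau,\sigma}\,(A^n)_{\sigma,\sigma_\covered} \;=\; (A^{n+1})_{\tau,\sigma_\covered},
\end{equation*}
where the middle equality is the inductive hypothesis together with the definition of matrix multiplication.

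The cylinder case is structurally identical, working with cyclic signatures and $\overline{A}$ in place of $A$; the only change is that the side-neighbor relation in row $n+1$ wraps around modulo $m$, as explicitly stipulated in the last clause of Definition~\ref{def:compatible}. The main subtlety, and the step that requires the most care, is distinguishing the state of a row-$n$ vertex inside $S$ (computed using only its neighbors in rows $n-1$ and $n$) from its state inside $S'$ (where it may additionally be covered by a new occupied neighbor in row $n+1$). The compatibility relation is engineered precisely to encode this shift, and verifying that it does so without over- or under-counting configurations is the crux of the induction.
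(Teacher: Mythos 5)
Your proposal is correct and follows essentially the same route as the paper: the one-row recursion $G^\tau_{m\times(n+1)}=\sum_\sigma A_{\tau,\sigma}G^\sigma_{m\times n}$ driven by the compatibility relation, the identification $G^\sigma_{m\times 1}=A_{\sigma,\sigma_{\covered}}$ as the base case, and Lemma~\ref{lem:P_sigma} to pass from almost-dominating to dominating sets; your phrasing as an induction with an explicit restriction/extension bijection is just a more careful rendering of the paper's recursive argument. Your base-case justification (a covered vertex in a single row needs an occupied neighbor within the row, which is precisely compatibility with $\sigma_{\covered}$) is in fact stated more accurately than in the paper's text.
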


\begin{proof}
  Consider $G^\tau_{m\times n}$, the generating function of almost
  dominating sets $S$ in the $m\times n$ grid with signature $\tau$. 
  Now $\tau$ is compatible with multiple signatures $\sigma$
  on row $n-1$. For each such $\sigma$, placing $\tau$ on the $n$th row increases 
  $S$ by $\occupied(\tau)$ and thus multiplies $G^{\sigma}_{m \times (n-1)}$ by a factor $z^{\occupied(\tau)}$. 
  Hence we can write
  \begin{equation}
    \label{eq:transfer-matrix-single-iteration}
    G^\tau_{m\times n} (z) 
    = \sum_{\text{$\sigma: \tau$ compatible}} z^{\occupied(\tau)} G^{\sigma}_{m\times (n-1)}(z) 
    = \sum_{\sigma} A_{\tau,\sigma} G^{\sigma}_{m\times (n-1)}(z)\,,
  \end{equation}
  and applying this reasoning recursively gives
  \begin{equation}
    \label{eq:transfer-matrix-n-iteration}
    G^\tau_{m\times n} (z) = \sum_{\sigma} (A^{n-1})_{\tau,\sigma}
    G^{\sigma}_{m\times 1}(z)\,.
  \end{equation}
  Now, $G^{\sigma}_{m\times 1}$ is a valid generating function for
  an almost dominating set, i.e., for signatures $\sigma$ with weights $z^{\occupied(\sigma)}$, 
  with the additional property that $\sigma$ does not contain
  any $\covered$s. If you look again at Definition \ref{def:compatible}, this is equivalent to saying that $\sigma$ 
  is compatible with $\sigma_{\covered}$. This gives
  \begin{equation}
    \label{eq:transfer-matrix-initial-value}
    G^\sigma_{m\times 1} (z) =  A_{\sigma,\sigma_{\covered}} \, .
  \end{equation}
   Combining this with~\eqref{eq:transfer-matrix-n-iteration} and~\eqref{eq:almost-full}
  completes the proof of \eqref{eq:G-transfer-grid}. The proof of \eqref{eq:G-transfer-cyl} is similar. 
\end{proof}

Domination on the torus is like domination on the cylinder, except
that occupied vertices in the $n$th row can cover vertices in the 1st row
and vice versa. As the following theorem shows, this corresponds to taking 
the trace of the $n$th power of the transfer matrix.

\begin{theorem}
  \label{the:G-transfer-torus}
  Let $G_{\overline{m}\times \overline{n}}(z)$ denote the domination polynomial of the $m\times
  n$ torus. Then
  \begin{equation}
    \label{eq:G-transfer-torus}
    G_{\overline{m}\times \overline{n}} (z) 
    =  \Tr \overline{A}^n \,.
  \end{equation}
\end{theorem}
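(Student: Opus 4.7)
The plan is to expand $\Tr \overline{A}^n$ as a sum over closed length-$n$ walks in the compatibility graph of cyclic signatures, and then to exhibit a weight-preserving bijection between these walks and the dominating sets of the torus. Writing
\[
  \Tr \overline{A}^n = \sum_{\rho_0,\rho_1,\ldots,\rho_{n-1}} \overline{A}_{\rho_0,\rho_1}\, \overline{A}_{\rho_1,\rho_2} \cdots \overline{A}_{\rho_{n-1},\rho_0},
\]
the only nonzero contributions come from cyclic tuples in which $\rho_j$ is compatible with $\rho_{j+1}$ for every $j$ (indices mod $n$), and each such contribution equals $z^{\occ(\rho_0)+\occ(\rho_1)+\cdots+\occ(\rho_{n-1})}$.

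Next I would identify $\rho_j$ with the signature of the $j$th row of the torus and construct a bijection between cyclically compatible tuples and dominating sets. In the forward direction, given such a tuple, set $S = \{(j,i) : (\rho_j)_i = \occupied\}$. Every non-occupied vertex is either \covered\ (and then has a neighbor in $S$ by condition~(c) of Definition~\ref{def:compatible}) or \uncovered\ (and then compatibility condition~(a), applied to the adjacent factor $\overline{A}_{\rho_{j-1},\rho_j}$, forces $(j-1,i)\in S$). Hence $S$ is dominating and the weight of the tuple equals $z^{|S|}$. In the backward direction, given a dominating set $S$, label each vertex $v=(j,i)$ as \occupied\ when $v\in S$, as \covered\ when $v\notin S$ and at least one of $(j+1,i)$, $(j,i-1)$, $(j,i+1)$ lies in $S$, and as \uncovered\ otherwise; since $S$ is dominating, the last case forces $(j-1,i)\in S$. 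A short case check shows that the row $\rho_j$ obtained this way is a valid cyclic signature and that $\rho_j$ is compatible with $\rho_{j+1}$ for each $j$.

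The main obstacle will be showing that the backward map produces the unique tuple associated with $S$. Two local observations suffice: the signature constraint rules out an \uncovered\ adjacent to an \occupied\ in the same row, and compatibility condition~(b) rules out labeling $(j,i)$ as \uncovered\ whenever $(j+1,i)\in S$. Together they force the label of every non-occupied vertex to be determined by $S$ alone, so the forward and backward maps are mutual inverses. Summing the weights over all cyclic tuples then yields $G_{\overline{m}\times\overline{n}}(z)$, as claimed in~\eqref{eq:G-transfer-torus}.
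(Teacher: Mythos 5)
Your proof is correct, but it takes a genuinely different route from the paper's. The paper reuses the machinery of Theorem~\ref{the:G-transfer}: it introduces an auxiliary 0th row carrying a cyclic signature $\sigma$, applies the row-by-row recursion $n$ times, and observes that demanding the $n$th row to carry the same signature $\sigma$ enforces the wrap-around compatibility, so that the torus generating function with $n$th-row signature $\sigma$ is $(\overline{A}^n)_{\sigma,\sigma}$; summing over $\sigma$ gives the trace. You instead expand $\Tr\overline{A}^n$ directly as a weighted sum over closed length-$n$ walks in the compatibility graph and build an explicit weight-preserving bijection with dominating sets. The two arguments count the same objects, but yours is self-contained (it does not lean on almost dominating sets or on the proof of Theorem~\ref{the:G-transfer}), and it makes explicit a point the paper leaves implicit: that each dominating set of the torus arises from \emph{exactly one} cyclically compatible tuple, because the \covered/\uncovered\ label of every unoccupied vertex is forced by the three implications of \eqref{eq:def-compatible} together with the prohibition of adjacent (\uncovered,\occupied) within a row. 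Without that uniqueness the trace could overcount, so your two ``local observations'' are exactly the right thing to verify; the paper buys brevity by delegating this to the reader's intuition about signatures, while your version buys rigor at the cost of length. One cosmetic remark: Definition~\ref{def:compatible} does not label its implications (a), (b), (c), so you should refer to them as the first, second, and third conditions of \eqref{eq:def-compatible}.
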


\begin{proof}
 On the torus, in addition to requiring that the cyclic signature $\sigma$ on the $t$th row is 
 compatible to the signature in the $(t-1)$st row for all $1 < t \le n$, 
 we also need the signature on the 1st row to be compatible with 
 the one on the $n$th row. 
  We can find all such configurations by adding a 0th row to the graph with signature $\sigma$, 
  applying the transfer matrix $n$ times (note that the first application of $\overline{A}$ requires that 
  the signature on the 1st row is compatible with $\sigma$) 
  and picking out the entries of $\overline{A}^n$ where the signature 
  on the $n$th row is also $\sigma$. Thus the generating function for dominating sets on the 
  $m \times n$ torus with signature $\sigma$ on the $n$th row is
  \begin{equation}
    \label{eq:T-transfer-proof}
    G^\sigma_{\overline{m}\times \overline{n}} (z) = (\overline{A}^n)_{\sigma,\sigma} \, ,
  \end{equation}
and summing over all $\sigma$ gives \eqref{eq:G-transfer-torus}.
\end{proof}

Theorems \ref{the:G-transfer} and \ref{the:G-transfer-torus} reduce the problem of computing the domination polynomials  
for the $m\times n$ grid, cylinder, or torus to computing the $n$th power of the transfer matrix
$A$ or $\overline{A}$. There are several ways to do this efficiently. We can compute the $n$th power of a
matrix $A$ by squaring it $\lfloor \log_2 n \rfloor$ times to obtain $A^1, A^2, A^4, \ldots, A^{2^{\lfloor \log_2 n \rfloor}}$, 
and multiplying whichever of these powers correspond to $1$s in the binary expansion of $n$. 
Squaring an $N\times N$ matrix can be done in time $O(N^\omega)$, where $\omega=3$
for the naive schoolbook method or $\omega = 2.371552$ with the
fastest known algorithm \cite{williams:etal:24}.  In our case
$N=a(m)$ or $\overline{a}(m)$, and in both cases $N=O(\lambda^m)$ with $\lambda=1+\sqrt{2}=2.4142\ldots$\ 
This gives a time complexity of essentially $O(\lambda^{\omega m})$. 
In addition, squaring a matrix whose entries are polynomials of degree $nm$ requires us
to multiply such polynomials, which takes $O((nm)^2)$ time 
using the simplest method of adding all the cross-terms. However, rather than 
analyzing the running time of this repeated-squaring approach in detail, we present a faster algorithm 
in the next section.

To conclude this section, we briefly discuss another transfer matrix approach to
domination on the grid. 
Oh \cite{oh:21} proposed what he called the ``state matrix recursion method''
for the domination polynomial of the grid. Although the phrase
``transfer matrix'' does not appear in \cite{oh:21}, it is essentially
a transfer matrix method, and Oh's Theorems 1 and 2 provide 
expressions for $G_{m\times n}(z)$ and $G_{\overline{m}\times n}(z)$ in the 
same spirit as ours. However, Oh focuses on the edges of
the graph rather than the vertices. In a graph with a dominating set, there are four 
types of edges, whose endpoints are labeled 
$(\covered,\covered)$,
$(\covered,\occupied)$, 
$(\occupied,\covered)$ and
$(\occupied,\occupied)$. 
Since each of the $m$ vertical edges connecting one row to the next 
can be in one of these four states, Oh's transfer matrix is $4^m$-dimensional rather than $O(\lambda^m)$-dimensional, 
making it less efficient than our transfer matrix to compute
domination polynomials. On the other hand, Oh's transfer matrix 
can be computed by a surprisingly simple recurrence, giving it an elegant mathematical form.

\section{The algorithm}
\label{sec:algorithm}

\begin{figure}
  \centering
  \includegraphics[width=0.6\linewidth]{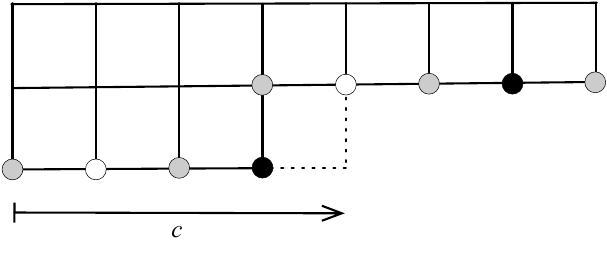}
  \caption{The algorithm adds a new row one vertex at a time from left to right. Here we illustrate a step where we add a vertex in column $c=5$ at the dashed lines. The current signature is $\sigma=\text{\covered \uncovered \covered \occupied'\uncovered \covered \occupied \covered}$ where ' marks the ``kink.'' Whether the new vertex is unoccupied or occupied produces one of two new signatures, 
  $\sigma_0 = \text{\covered \uncovered \covered \occupied \covered'\covered \occupied \covered}$ or
  $\sigma_1 = \text{\covered \uncovered \covered \occupied \occupied'\covered \occupied \covered}$. 
  However, in this example $\sigma[c] = \uncovered$ (the uncovered vertex above the new vertex) so the new vertex 
  must be occupied and $\sigma_0$ is invalid.}
  \label{fig:kink}
\end{figure}

The repeated-squaring approach for computing the $n$th power of the transfer matrix, described in the previous section, 
fails to take advantage of $A$'s (and $\overline{A}$'s) structure. First of all, these matrices are quite sparse, since most pairs of signatures are not compatible. 
Secondly, their nonzero entries are powers of $z$; so, rather than multiplying arbitrary polynomials, 
we can multiply by $A$ or $\overline{A}$ by shifting the coefficients of each polynomial and adding the results. 
Thirdly, and most importantly, in this section we will show how to add a row, and thus multiply by $A$, 
using a series of even simpler operations. 
This will reduce the running time from $O(\lambda^{\omega m})$ 
to essentially $O(\lambda^m)$.

From a bird's eye perspective, the transfer matrix method turns a
two-dimensional problem into a sequence of $n$ one-dimensional problems.
This idea can be applied again. By filling the new row one vertex at a time, from left to right, 
we can subdivide the one-dimensional problem of adding a row 
into a sequence of $m$ zero-dimensional problems. 

At each step, the $n$th row is filled up to column $c-1$. The corresponding signature contains
a ``kink'' at column $c$, where it hops up to the $(n-1)$st row; see Figure~\ref{fig:kink}. 
When we add a vertex in the $c$th column, this signature can be mapped to two possible signatures, 
depending on whether this new vertex is occupied or not.

Across the kink at $c$, the substrings (\uncovered,\occupied) and
(\occupied,\uncovered) are no longer forbidden, so the number of
signatures is a bit larger than $a(m)$ or $\overline{a}(m)$. And in
the case of the non-cyclic signatures, the number
depends on $c$. However, we show in Appendix~\ref{sec:n_signatures} that for each $1 \le c \le m$ 
the number of signatures grows as $O(\lambda^m)$. 

\begin{figure}
  \centering
  \parbox{0.68\linewidth}{
  \begin{algorithmic}[0]
    \State \textbf{subroutine extend} $(\sigma, c)$
    \State $\sigma_1 \defeq \sigma$
    \State $\sigma_1[c] \defeq \occupied$
    \If {$c>1$ \textbf{and}  $\sigma_1[c-1] = \uncovered$}
    \State $\sigma_1[c-1]\defeq\covered$
    \EndIf
    \If {$\sigma[c] = \uncovered$} \Comment{new vertex needs to cover $\sigma[c]$}
    \State $\sigma_0 \defeq \text{invalid}$
    \Else
    \State $\sigma_0 \defeq \sigma$
    \If{$\sigma[c] = \occupied$}
       \State $\sigma_0[c] \defeq \covered$
    \ElsIf{$c > 0$ \textbf{and} $\sigma_0[c-1] = \occupied$}
       \State $\sigma_0[c] \defeq \covered$
    \Else
    \State $\sigma_0[c] \defeq \uncovered$
    \EndIf
    \EndIf
    \State \Return $\sigma_0, \sigma_1$
\end{algorithmic}}
\caption{Adding a new vertex at column $c$ in the current row.}
\label{fig:extend}
\end{figure}

Each step of this new transfer matrix algorithm is the addition of a new
vertex in a partially filled or empty row. This gives a subroutine
$\textbf{extend}(\sigma, c)$, which we show for the grid in Figure~\ref{fig:extend}. 
This subroutine interprets $\sigma$ as a signature where the current row
is filled up to the $(c-1)$st column, and returns up to two signatures $\sigma_0,\sigma_1$ where
$\sigma_0$ (resp., $\sigma_1$) results from $\sigma$ by adding an unoccupied
(resp., occupied) vertex in column $c$. 

The $\textbf{extend}$ subroutine takes care of the
compatibility between $\sigma$, $\sigma_0$, and $\sigma_1$. If the vertex to the left 
of the new vertex is uncovered in $\sigma$, in $\sigma_1$ it becomes covered by the new occupied vertex. 
It also marks the new vertex as covered in $\sigma_0$ if the vertex to its left or above it is occupied in $\sigma$. 
Finally, if $\sigma[c] = \uncovered$, i.e., if the vertex immediately above the new vertex is uncovered 
as in Figure~\ref{fig:kink}, then the new vertex must be occupied. In that case $\sigma_0$ is defined as \emph{invalid}, 
and does not need to be pursued further by the algorithm. 

\begin{figure}
  \centering
  \parbox{0.68\linewidth}{
  \begin{algorithmic}[0]
    \State $L_{\text{old}} \defeq (\sigma_{\covered}, 1)$ \Comment{zeroth row configuration, $\sigma_{\covered}=(\covered,\covered,\ldots,\covered)$}
  \For{$r = 1,\ldots,n$} 
     \State $L_{\text{new}} \defeq \text{empty list}$
  \For{$c = 1,\ldots,m$} 
  \While{$L_{\text{old}}$ not empty} 
  \State take $(\sigma,G^\sigma)$ out of $L_{\text{old}}$
  \State $\sigma_0, \sigma_1 \defeq \textbf{extend}(\sigma, c)$
  \If{$\sigma_0 = \text{invalid}$} ignore $\sigma_0$
  \ElsIf{$(\sigma_0, \cdot) \not\in L_{\text{new}}$} add $(\sigma_0, G^\sigma)$ to $L_{\text{new}}$
  \Else\ replace $(\sigma_0, G)$ in $L_{\text{new}}$ with $(\sigma_0,G+G^\sigma)$
  \EndIf
  \If{$(\sigma_1, \cdot) \not\in L_{\text{new}}$} 
    add $(\sigma_1, z G^\sigma)$ to $L_{\text{new}}$
  \Else\ replace $(\sigma_1, G)$ in $L_{\text{new}}$ with $(\sigma_1,G+z G^\sigma)$
   \EndIf
  \EndWhile
  \State $L_{\text{old}} \defeq L_{\text{new}}$ 
  \EndFor
  \EndFor
  \State $\displaystyle G_{n,m}(z) \defeq \sum_{\mathclap{ \sigma \in
    L_{\text{old}},\, \uncovered(\sigma) = 0}} G^\sigma(z)$
\end{algorithmic}}
\caption{The algorithm to compute the domination polynomial of the grid 
  $G_{m\times n}$. Completing each row, i.e., completing the inner loop over the $m$ columns, has the effect of multiplying by the transfer matrix $A$. As a programming detail, the assignment $L_{\text{old}} \defeq L_{\text{new}}$ is by reference 
  (i.e., by moving a pointer) to avoid copying data from one location in memory to another.}
\label{fig:loop}
\end{figure}

We use the subroutine \textbf{extend} in an algorithm that loops over the 
$n$ rows and $m$ columns of the grid (Figure~\ref{fig:loop}). 
This algorithm builds the rows of the grid one vertex at a time  
while maintaining a list 
of configurations, i.e., pairs $(\sigma, G^\sigma)$ where $\sigma$ is a 
signature and $G^\sigma$ is the corresponding generating function. 
Whenever we add a new vertex we multiply 
by $z$ if that vertex is occupied, adding $G^\sigma$ to $G^{\sigma_0}$ 
and adding $zG^\sigma$ to $G^{\sigma_1}$. 
Note that $\sigma_0$ or $\sigma_1$ might already be in the list 
of new signatures, since adding the new vertex hides the vertex above it. 
That is, $\sigma_0 = \sigma'_0$ if $\sigma$ and $\sigma'$ differ only in 
column $c$, and similarly for $\sigma_1$ and $\sigma'_1$. 
Each loop where $c$ ranges from $1$ to $m$ thus adds a new row and 
effectively applies the transfer matrix. This continues until we complete the $n$th row 
and obtain the dominating polynomial for the entire grid.

In order to carry out these computations for large grids, it turns out that memory, 
not time, is the limiting resource. Thus to reach grids as large as possible, we need to think carefully 
about how to represent and store both signatures $\sigma$ and their polynomials $G^\sigma$ as efficiently as possible. 
To some readers the rest of this section will seem like mere implementation details. But these details play an essential role. 
While both the time and space requirements of our algorithm are exponential, they reduce the exponent, 
and without them we would have no hope of obtaining the results we present in the next section.

First, to represent the signatures $\sigma$, we treat the three symbols \uncovered, \covered, and \occupied\ as ternary digits, 
and interpret each $\sigma$ as an integer between $0$ and $3^m-1$. 
Since $3^{40} \le 2^{64}$, the signatures fit into 64-bit integers as long as $m \le 40$. 

We store the polynomials $G^\sigma$ as vectors of integer coefficients. However, 
since these coefficients grow exponentially in $mn$, they 
quickly get too large to store as fixed-width integers with $32$, $64$, or $128$ bits.
One could use variable-length integers to deal with this problem,
but this would add a factor $nm$ to both the time and the space complexity. 

Instead, we stick with fixed-width integers and
use modular arithmetic. For some integer $b$, there is a set of prime moduli 
$p_i < 2^b$ such that $\prod_i p_i \geq 2^{mn}$. 
We then carry out our calculations mod $p_i$ using $b$-bit integers, 
and use the Chinese Remainder Theorem
\cite{clr:algorithms} to recover the coefficients. Even for our largest computations, integers of length $b = 16$ suffice. 
This approach trades space (the length of the integers) for time 
(one run for each prime modulus). But the runs for different moduli can be done in parallel, 
and we do them on separate processors. 
The final computation using the Chinese Remainder Theorem has to be done with 
variable-length integers to produce the coefficients of $G^\sigma$, but this takes  
time and space which is polynomial in their length $mn$. 

To contain lists of configurations of exponential size, an efficient data structure
is mandatory. There is no point in using tables of size $3^m$ (the number of possible 
ternary sequences) when only $O(\lambda^m)$ signatures actually appear. 
We use an ordered associative container like \texttt{set} or \texttt{map} from
the standard C++ library where signatures are ordered according to their ternary value. 
These data structures guarantee logarithmic complexity for search, insert and delete
operations, so for lists of size exponential in $m$ they work in $O(m)$ time \cite{clr:algorithms}.   

Since the maximum degree of $G^\sigma$ is $mn$, our integers have fixed width $b$, and there are 
$O(\lambda^m)$ different signatures $\sigma$, the total space complexity of the algorithm is $O(mn\lambda^m)$.  
The time complexity of our algorithm is $O(m^3 n^2 \lambda^m)$. The
factor $\lambda^m$ comes from the size of the lists. One factor of $mn$ comes from the loops in Figure~\ref{fig:loop} 
that add the $mn$ vertices one at a time. Another factor of $mn$ comes from copying the polynomials $G^\sigma$, shifting them 
(i.e., multiplying them by $z$) and adding them together.  
The last factor of $m$ comes from the logarithmic complexity of the list operations. Ignoring polynomial factors, then, 
our time and space complexity is $O(\lambda^m)$. 

What we have explained so far is the algorithm for the grid. The cylinder
requires only a small change in the subroutine \textbf{extend}.
When adding the last vertex of a row at $c=m$, the subroutine has to
ensure compatibility with the vertex at $c=1$ to make the full signature
cyclic. Other than that, no changes are required, and the space and
time complexity is the same as for the grid. In particular the algorithm 
shown in Figure~\ref{fig:loop} stays the same. 

The time complexity increases, however, when we adapt our algorithm to the
torus. Here we have to run the algorithm of Figure~\ref{fig:loop} 
for each cyclic signature $\sigma$ in the zeroth row, instead of just 
starting with $\sigma_{\covered}$.  
Hence we need an additional outer loop of length
$\overline{a}(m) = O(\lambda^m)$, resulting in an overall time complexity of
$O(m^3 n^2 \lambda^{2m})$, or $O(\lambda^{2m})$ ignoring polynomial factors. The space complexity remains the same.

\begin{figure}
  \centering
  \includegraphics[width=0.6\linewidth]{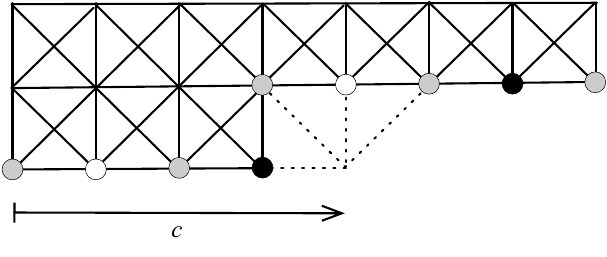}
  \caption{In the king graph, the new vertex at $c$ has to ensure
    compatibility with 4 neighbors.}
  \label{fig:kink-king}
\end{figure}

For the king graph $K_{m \times n}$, we just need to modify the \textbf{extend}
subroutine, since it must consider all four neighbors of the new vertex to ensure
compatibility (Figure~\ref{fig:kink-king}). Because of the
neighbor in the north-west, the number of signatures is now $a(m+1)$. 
The main algorithm in Figure~\ref{fig:loop} stays the same, and the time and space complexity are the same 
as for the grid and the cylinder.

To push our computations further, we take advantage of symmetries.
We can identify each signature of a full row ($c=n$) with its mirror
image, which roughly halves the number of signatures. 
For cyclic signatures we have also translational
symmetry, which reduces the number of cyclic signatures by a factor of
approximately $1/m$. See Appendix~\ref{sec:n_signatures} for the
precise factors.

To give the reader an idea about the actual computational resources needed,
consider the computationally largest task we solved. Using 16-bit
integers for the coefficients, computing
$G_{\overline{24}\times 24}(z) \bmod p_i$ for each prime modulus took 125 hours of 
wall-clock time and required 481 GB of memory. Finally, we needed 36 parallel runs 
for different moduli $p_i$ to recover the coefficients using  
the Chinese Remainder Theorem.

\section{Results}
\label{sec:results}

\begin{table}
  \centering
\begin{tabular}{l|rrc}
  & $m\leq$ & $n\leq$ & $m+n \leq$ \\\hline
  $G_{m\times n}(z)$ & & & $44$ \\
  $G_{\overline{m}\times n}(z)$ & $24$ & $24$ &  \\
  $G_{\overline{m}\times \overline{n}}(z)$ & $17$ & $17$ & \\
  $K_{m\times n}(z)$ &  &  & $44$\\
  $G_{n\times n}(1)$ & & $24$ & \\
  $G_{m\times n}(1)$ & $22$ & $100$ & \\
  $G_{\overline{m}\times n}(1)$ & $22$ & $100$ & \\
  $G_{\overline{n}\times n}(1)$ & & $26$ & \\
  $K_{m\times n}(1)$ & $22$ & $100$ &
\end{tabular}
\caption{Sizes of graphs for which we have computed domination polynomials or
  the total number of dominating sets.}
\label{tab:sizes}
\end{table}

Our algorithm allowed us to compute domination polynomials and the
total number of dominating sets for the graphs listed in
Table~\ref{tab:sizes}. We show the complete domination polynomials for examples of size $m=n \le 8$ in Appendix~\ref{sec:showcase}. The complete data is available from the author's website
\cite{my-domination-site}. 

The varying sizes for which we can carry out these computations 
are due to two facts. First, the number of cyclic signatures $\overline{a}(m)$ is less than the number of signatures $a(m)$, 
making computations for the cylinder somewhat easier than those for the grid. 
Secondly, as discussed above, the computation time for the torus has an extra factor of $\lambda^m$ 
due to the need to sum over all starting signatures. 

To compute the total number of dominating sets, we used a
modified version of our algorithm, in which we do not store the full
domination polynomial with each signature, but only its value at
$z=1$. This saves us a factor of $mn$ in time and space complexity and
allows us to solve larger systems. In particular, we calculated the
number of dominating sets for $m \leq 22$ and $n \leq 100$ for the
grid, the cylinder, and the king graph. This allowed us to compute
precise numerical estimates for the growth rate of these graphs (see Section~\ref{sec:growthrate}).

\begin{table}
  \centering
  \begingroup
  \setlength{\tabcolsep}{3pt}
  \footnotesize
  \begin{tabular}{r|rrrrrrrrrrrrrrrrrrrrrrrr}
 & \multicolumn{24}{c}{$m$}\\
 $n$ & 1 & 2 & 3 & 4 & 5 & 6 & 7 & 8 & 9 & 10 & 11 & 12 & 13 & 14 & 15 & 16 & 17 & 18 & 19 & 20 & 21 & 22 & 23 & 24\\\hline
1 & 1 & 1 & 1 & 2 & 2 & 2 & 3 & 3 & 3 & 4 & 4 & 4 & 5 & 5 & 5 & 6 & 6 & 6 & 7 & 7 & 7 & 8 & 8 & 8 \\
2 & 1 & 2 & 2 & 2 & 3 & 4 & 4 & 4 & 5 & 6 & 6 & 6 & 7 & 8 & 8 & 8 & 9 & 10 & 10 & 10 & 11 & 12 & 12 & 12 \\
3 & 1 & 2 & 3 & 3 & 4 & 5 & 6 & 6 & 7 & 8 & 9 & 9 & 10 & 11 & 12 & 12 & 13 & 14 & 15 & 15 & 16 & 17 & 18 & 18 \\
4 & 2 & 3 & 4 & 4 & 6 & 6 & 7 & 8 & 10 & 10 & 11 & 12 & 13 & 14 & 15 & 16 & 17 & 18 & 19 & 20 & 21 & 22 & 23 & 24 \\
5 & 2 & 3 & 4 & 5 & 7 & 8 & 9 & 10 & 12 & 12 & 14 & 15 & 17 & 17 & 19 & 20 & 21 & 22 & 24 & 24 & 26 & 27 & 29 & 29 \\
6 & 2 & 4 & 5 & 6 & 8 & 9 & 11 & 12 & 14 & 15 & 16 & 18 & 20 & 20 & 22 & 24 & 25 & 26 & 28 & 30 & 31 & 32 & 34 & 35 \\
7 & 3 & 4 & 6 & 7 & 9 & 10 & 12 & 14 & 16 & 17 & 19 & 20 & 22 & 24 & 25 & 27 & 29 & 30 & 32 & 34 & 36 & 37 & 39 & 40 \\
8 & 3 & 5 & 7 & 8 & 10 & 12 & 14 & 16 & 18 & 19 & 21 & 23 & 25 & 27 & 29 & 30 & 32 & 34 & 36 & 38 & 40 & 42 & 44 & 46 \\
9 & 3 & 5 & 7 & 9 & 11 & 13 & 15 & 18 & 20 & 21 & 24 & 26 & 28 & 30 & 32 & 34 & 36 & 38 & 41 & 42 & 44 & 46 & 49 & 51 \\
10 & 4 & 6 & 8 & 10 & 12 & 14 & 17 & 20 & 22 & 24 & 26 & 28 & 31 & 33 & 36 & 38 & 40 & 42 & 45 & 47 & 49 & 51 & 54 & 56 \\
11 & 4 & 6 & 9 & 11 & 13 & 16 & 18 & 21 & 24 & 26 & 28 & 31 & 34 & 36 & 39 & 41 & 44 & 46 & 49 & 52 & 54 & 56 & 59 & 62 \\
12 & 4 & 7 & 10 & 12 & 14 & 17 & 20 & 23 & 26 & 28 & 31 & 34 & 37 & 39 & 42 & 45 & 48 & 50 & 53 & 56 & 59 & 61 & 64 & 67 \\
13 & 5 & 7 & 10 & 13 & 15 & 18 & 21 & 25 & 28 & 30 & 33 & 36 & 40 & 42 & 45 & 48 & 51 & 54 & 57 & 60 & 63 & 66 & 69 & 72 \\
14 & 5 & 8 & 11 & 14 & 16 & 20 & 23 & 27 & 30 & 32 & 36 & 39 & 42 & 45 & 48 & 52 & 55 & 58 & 61 & 64 & 68 & 70 & 74 & 77 \\
15 & 5 & 8 & 12 & 15 & 17 & 21 & 24 & 28 & 32 & 34 & 38 & 41 & 45 & 48 & 51 & 55 & 58 & 62 & 65 & 68 & 72 & 75 & 79 & 82 \\
16 & 6 & 9 & 13 & 16 & 18 & 22 & 26 & 30 & 34 & 36 & 40 & 44 & 48 & 51 & 54 & 58 & 62 & 66 & 69 & 72 & 76 & 80 & 84 & 87 \\
17 & 6 & 9 & 13 & 17 & 19 & 24 & 27 & 32 & 36 & 38 & 43 & 46 & 51 & 54 & 57 & 62 & 65 & 70 & 73 & 76 & 81 & 84 & 89 & 92 \\
18 & 6 & 10 & 14 & 18 & 20 & 25 & 29 & 34 & 38 & 40 & 45 & 49 & 54 & 57 & 60 & 65 & 69 & 73 & 77 & 80 & 85 & 89 & 93 & 97 \\
19 & 7 & 10 & 15 & 19 & 21 & 26 & 30 & 36 & 40 & 42 & 47 & 51 & 57 & 60 & 63 & 68 & 72 & 77 & 81 & 84 & 89 & 93 & 98 & 102 \\
20 & 7 & 11 & 16 & 20 & 22 & 28 & 32 & 38 & 42 & 44 & 50 & 54 & 60 & 63 & 66 & 72 & 76 & 81 & 85 & 88 & 94 & 98 & 103 & 107 \\
21 & 7 & 11 & 16 & 21 & 23 & 29 & 33 & 39 & 44 & 46 & 52 & 56 & 62 & 66 & 69 & 75 & 79 & 85 & 89 & 92 & 98 & 102 & 108 & 112 \\
22 & 8 & 12 & 17 & 22 & 24 & 30 & 35 & 41 & 46 & 48 & 54 & 59 & 65 & 69 & 72 & 78 & 83 & 89 & 93 & 96 & 102 & 107 & 113 & 117 \\
23 & 8 & 12 & 18 & 23 & 25 & 32 & 36 & 43 & 48 & 50 & 57 & 61 & 68 & 72 & 75 & 82 & 86 & 92 & 97 & 100 & 107 & 111 & 117 & 122 \\
24 & 8 & 13 & 19 & 24 & 26 & 33 & 38 & 45 & 50 & 52 & 59 & 64 & 71 & 75 & 78 & 85 & 90 & 96 & 101 & 104 & 111 & 116 & 122 & 127 \\
\end{tabular}
 \endgroup
  \caption{Domination numbers $\gamma(G_{\overline{m}\times n})$ of the
    cylinder graph.}
  \label{tab:domination_numbers_cylinder}
\end{table}

From the domination polynomials we can get other parameters like the
domination number $\gamma$ (the minimum cardinality of a dominating
set) and the number of these minimum dominating sets. There are,
however, more efficient algorithms to compute $\gamma$ without
computing the full domination polynomial. For example, Alanko et al.~\cite{alanko:etal:11}
computed $\gamma(G_{m\times n})$ for $m, n \leq 29$.  And in the same
year, Gon\c{c}alves et al.~\cite{goncalves:etal:11} proved the general formula
\begin{equation}
  \label{eq:gamma_grid_formular}
  \gamma(G_{m\times n}) = \left\lfloor\frac{(m+2)(n+2)}{5}\right\rfloor - 4
  \qquad (n,m \geq 16)\,.
\end{equation}
The sequence $\gamma(G_{n\times n})$ is \seqnum{A104519}.

For the cylinder, we did not find any results for the domination number in the literature. Therefore we present 
Table~\ref{tab:domination_numbers_cylinder} obtained with our algorithm, giving $\gamma(G_{\overline{m} \times n})$ 
for all $m, n \le 24$.

For the torus, Shao et al.~\cite{shao:etal:18} computed
$\gamma(G_{\overline{n}\times\overline{n}})$ for $n\leq 24$.
The corresponding sequence from \seqnum{A094087} in the OEIS lists
values up to $n = 27$. 
Crevals and {\"{O}}sterg{\r{a}}rd \cite{crevals:ostergard:18} found formulae for
$\gamma(G_{\overline{m}\times\overline{n}})$ for $m<20$ and arbitrary $n$. 

Finally, for the king graph, no computation is necessary to find $\gamma$. 
Arshad, Hayat, and Jamil \cite{arshad:hayat:jamil:23} showed 
\begin{equation}
  \label{eq:gamma_kings_formular}
  \gamma(K_{m\times n}) = \left\lceil\frac{m}{3}\right\rceil \left\lceil\frac{n}{3}\right\rceil\,. 
\end{equation}
The sequence $\gamma(K_{m\times n}) $ is \seqnum{A075561}.

\begin{table}
  \centering
  \begin{tabular}{l||cc||c}
    & \multicolumn{2}{c||}{OEIS} & this work \\
    & sequence & \# elements & \# elements \\\hline
    $N_{\gamma}(G_{m\times n})$ & \seqnum{A350820} & 276 & 946 \\
    $N_{\gamma}(G_{n\times n})$ & \seqnum{A347632} & 12 & 22 \\
  $N_{\gamma}(G_{\overline{m}\times n})$ &  &  & 300\\
  $N_{\gamma}(G_{\overline{n}\times \overline{n}})$ & \seqnum{A347557}
                               & 8 & 17 \\
    $N_{\gamma}(K_{m\times n})$ & \seqnum{A350815}  & 276 & 946\\
    $N_{\gamma}(K_{n\times n})$ & \seqnum{A347554} & 12 & 22
  \end{tabular}
\caption{The number of minimum dominating sets, OEIS vs.\ our results.}
\label{tab:OEIS-Ngamma}
\end{table}

\begin{table}
  \centering
  \begin{tabular}{rrrrr}
    $n$ & grid $G_{n\times n}$ & cylinder $G_{\overline{n}\times n}$ &
                                                                       torus
                                                                       $G_{\overline{n}\times\overline{n}}$
    & king $K_{n\times n}$\\\hline
    2 & 6 & 6 & 6 & 4 \\
    3 & 10 & 34 & 48 & 1 \\
    4 & 2 & 16 & 40 & 256 \\
    5 & 22 & 320 & 10 & 79 \\
    6  & 288  & 36  & 18  &  1 \\
    7  & 2  & 56  & 686  &  243856 \\
    8  &  52 & 5565  & 129224  &  3600 \\
    9  &  32 & 20196  &  36 &  1 \\
    10  & 4  & 32210  & 10  & 581571283  \\
    11  & 32  & 88  & 6292  &  281585 \\
    12  & 21600  &  121428 & 162  &  1 \\
    13  &  18 &  388284 &  2704 &  2722291223553 \\
    14  &  540360 & 224  &  56 &  32581328 \\
    15  &  34528 & 1489960  &  10 &  1 \\
    16  &  100406 &  12800 &  916736 &   21706368614058886\\
    17  &  70266144 &  251464 & 29327728  &   5112264019\\
    18  &  1380216154 &  2304 &   &   1 \\
    19  &  1682689266 &  36784 &   &   268740319616196074546\\
    20  &  77900162 &  73062090 &   &   1028516654620\\
    21  &  233645826 &  29787744 &   &  1 \\
    22  &  200997249200 &  738959760 &   &   4839916638142874877046813\\
    23  &   &  73600 &   &   \\
    24  &   &  884736 &   &  
  \end{tabular}
  \caption{The number of minimum dominating sets $N_\gamma$ in various $n\times n$ graphs.}
  \label{tab:mindom}
\end{table}

\begin{table}
  \centering
  \begin{tabular}{l||cc||c}
    & \multicolumn{2}{c||}{OEIS} & this work \\
    & sequence & \# elements & \# elements \\\hline
    $G_{m\times n}(1)$ & \seqnum{A218354} & 198 & 946 \\
    $G_{n\times n}(1)$ & \seqnum{A133515} & 15 & 24 \\
     $G_{\overline{m}\times n}(1)$ & \seqnum{A286514} & 91 & 325\\
  $G_{\overline{n}\times n}(1)$ & \seqnum{A286914}
                                 & 12 & 26 \\
    $G_{\overline{n}\times\overline{n}}(1)$ & \seqnum{A303334} & 8 & 17 \\
    $K_{m\times n}(1)$ & \seqnum{A218663}  & 240 & 946\\
    $K_{n\times n}(1)$ & \seqnum{A133791} & 18 & 22
 \end{tabular}
\caption{The total number of dominating sets, OEIS vs.\ our results.}
\label{tab:OEIS-total}
\end{table}

Much less is known about the number $N_\gamma$ of dominating sets of minimum
size $\gamma$ in these graphs. As often, the OEIS is the only source
of knowledge for these sequences. Table~\ref{tab:OEIS-Ngamma} shows
the OEIS results in comparison to our data. Note that the OEIS stores
2-dimensional sequences in linear order read by antidiagonals. Hence
if one knows a 2-dimensional sequence $A_{m,n}$ for all $m+n \leq k$, the
linear sequence contains $k(k-1)/2$ elements.

Table~\ref{tab:mindom} shows our
results for $N_\gamma$ on $n\times n$ grids, cylinders, tori, and king graphs for various $n$. 
Interestingly, all these sequences are  non monotonic. This is most easily
understood for the king graph: whenever $n$ is divisible by $3$, the board can be tiled
by $(n/3)^2$ king's neighborhoods of size $3 \times 3$, 
and the unique minimum dominating set has one king in the center of each tile. 
For other values of $n$, there are many more arrangements of kings to cope with the interactions between them, 
including ``defects'' where the same vertex is covered by more than one king. 

\begin{table}
  \centering
  \rotatebox{90}{
    \begin{minipage}{\textheight}
    {\tiny
  \begin{tabular}{rl}
1 & {1}\\
2 & {11}\\
3 & {291}\\
4 & {28661}\\
5 & {10982565}\\
6 & {16031828359}\\
7 & {89373230342147}\\
8 & {1904212088591018521}\\
9 & {155026375803222057878889}\\
10 & {48225130114674924906540348115}\\
11 & {57322477811272486520770053115140403}\\
12 & {260351257812272076026660518356378279922077}\\
13 & {4518323367029192938323955373627093441598993023433}\\
14 & {299624906403253780837722041979448648614417149864627538623}\\
15 & {75920925315147351643321026644303797291226237802087526929477529215}\\
16 & {73506889909106192805189860657770727246194962991718803056889525904074284073}\\
17 & {271942808316392849194744097645785662983965539959687059259748204907695205283051026009}\\
18 & {3844236368532391866648400256960909541578872186831658555476061284680772657947421051859726699115}\\
19 & {207646953035194635103908536091585837366786682068864583101871208083573054786197048223152185594685772535619}\\
20 &
     {42857222052146105634373349191667819949558103907317621800201148346434914932291721607912087374140992139321095174522717}
    \\
21 & {33799124158494071677924721438330027552326678220943380010289802318691700882663509162403495792505262990597634684089405594230394397}\\
22 &
     {101852067813875434290677808813207242242027941550711384986722241589709222671728912464757199623466641158572318062654315437747338811787784559599}
    \\
23 &
     {1172781831506875248624491128214157705060276306376593017925817288859622363961093172623234050354080607835750132458893872967965324411437914582111375760429975}
    \\
    24 &
         {51599748273445104357547563282063327408978378463540414307289606061038353421228700908049716554024892118952145604818081188778611944983120752181248857829775808570415670777}
  \end{tabular}}
  \caption{The total number of dominating sets in the $n \times n$ grid, i.e., $G_{n \times n}(1)$.}
\label{tab:totals}
\end{minipage}}
\end{table}

As for $N_\gamma$, the OEIS is the only source of knowledge for the total number of dominating sets. 
Table~\ref{tab:OEIS-total} compares the OEIS entries and 
our results, and Table~\ref{tab:totals} shows the total number on square grids, i.e.,  
$G_{n \times n}(1)$, for all $n \le 24$. Results on cylinders, tori, and king graphs are available 
from the author's website.

\section{Growth rates}
\label{sec:growthrate}

The length of the integers in Table~\ref{tab:totals} demonstrates visually that the
total number of dominating sets $G_{n \times n}(1)$ grows exponentially in the area, 
i.e., as $\mu^{n^2}$ for some $\mu$. In fact, it follows from supermultiplicativity 
and Fekete's Lemma that
\begin{equation}
  \label{eq:fekete}
  \lim_{m,n\to\infty} G_{m\times n}(1)^{\frac{1}{mn}} = \sup G_{m\times n}(1)^{\frac{1}{mn}}\,,
\end{equation}
Since $1 \leq G_{m\times n}(1) \leq 2^{mn}$, the supremum is
finite and the limit
\begin{equation}
  \label{eq:def-growth-constant}
  \mu = \lim_{m,n\to\infty} G_{m\times n}(1)^{\frac{1}{mn}} 
\end{equation}
exists. By the same argument, for any fixed $m$ the limit 
\begin{equation}
  \label{eq:def-growth-rate-m}
  \mu_m = \lim_{n\to\infty} G_{m\times n}(1)^{\frac{1}{mn}}
\end{equation}
exists, and that $\lim_{m\to\infty}\mu_m = \mu$. The same
arguments apply to growth rates on the cylinder, torus, and king graph.

\begin{figure}
  \centering
  \includegraphics[width=0.8\linewidth]{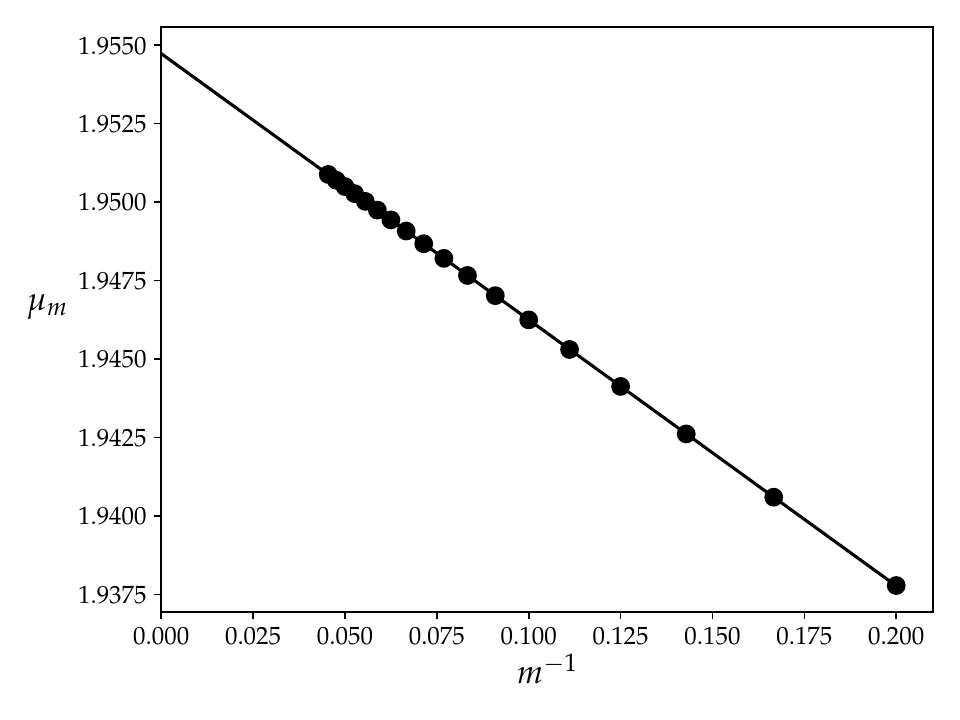}
  \caption{Growth constants $\mu_m$ \eqref{eq:def-growth-rate-m} for the grid versus
    $m^{-1}$.}
  \label{fig:mu_m_grid}
\end{figure}

Thus, in order to estimate $\mu$ numerically, we compute $\mu_m$
for some finite values of $m$ and then extrapolate to $m=\infty$. 
Numerically, we find that the sequence on the right-hand side of
\begin{equation}
  \label{eq:mu_m_from_G}
  \mu_m = \lim_{n\to\infty} \left(\frac{G_{m\times n}(1)}{G_{m\times (n-1)}(1)}\right)^{1/m}
\end{equation}
converges very quickly: the first 50 decimals no longer change for $n > 30$. 
A plot of these estimates of $\mu_m$ as a function of $m^{-1}$
(Figure~\ref{fig:mu_m_grid}) suggests that
\begin{equation}
  \label{eq:mu_m_linear_order}
  \mu_m \simeq \mu + \frac{\mu^{(1)}}{m}
\end{equation}
for some negative constant $\mu^{(1)}$. 

We could use a linear fit in Figure~\ref{fig:mu_m_grid} to estimate
$\mu$. But we proceed more carefully, and take higher order terms
into account.  We assume that
\begin{equation}
  \label{eq:mu_m_all_orders}
  \mu_m = \mu + \sum_{k=1}^\infty \frac{\mu^{(k)}}{m^k}
\end{equation}
and then use Bulirsch-Stoer extrapolation \cite{bulirsch:stoer:64},  a reliable, rapidly converging
method based on rational interpolation. See \cite[Section 4.3]{mertens:22} for a
detailed description of this method applied in a similar situation.
As result we get 
\begin{equation}
  \label{eq:mu_grid_bst}
  \mu = 1.9547511954080(8)\,.
\end{equation}
For the growth rate of the cylinder we get
\begin{equation}
  \label{eq:mu_cyl_bst}
  \overline{\mu} = 1.9547511954085(3)\,,
\end{equation}
which equals the growth rate for the grid within the error bars.  
Based on the assumption that the vertical boundaries of the grid have a decaying effect as 
$m \to \infty$, we conjecture that these two growth rates are in fact equal.

Our data for the torus is not sufficient to compute its growth constant
with the same accuracy, but we do not have to! If you look at
\eqref{eq:G-transfer-cyl} and \eqref{eq:G-transfer-torus}, 
you see that in the limit $n\to\infty$, both right-hand sides are
dominated by the largest eigenvalue of the matrix $\overline{A}$ for $z=1$,
which equals the growth rate $\overline{\mu}_m$. Hence the growth
rate for the torus equals that for the cylinder \eqref{eq:mu_cyl_bst}.

Let $\eta$ denote the growth rate of the king graph. With the same
methods, we estimate
\begin{equation}
  \label{eq:my_kings_bst}
  \eta = 1.997064386596(3)\,.
\end{equation}
This value fits right between the bounds proved by Baumann et al.\ \cite{baumann:calkin:lyle:09},
\begin{equation}
  \label{eq:mu-kings-bounds}
    1.9969 \leq \eta \leq 1.9972 \, ,
\end{equation}
and we conjecture that the first ten decimal digits of \eqref{eq:my_kings_bst} are correct.

\section{Conclusions}
\label{sec:conclusions}

We have presented a transfer matrix algorithm for computing dominating polynomials, and in particular 
counting dominating sets and minimum dominating sets, on the grid, cylinder, and torus graphs, and on 
the king graph. While our algorithm takes exponential time and requires exponential space, we are able 
to significantly reduce the exponent by breaking the induction over rows into an induction over single vertices. 

Along with a careful use of representations and data structures, including representing large integers 
using the Chinese Remainder Theorem, this reduces the running time (ignoring polynomial factors) 
to $O(\lambda^m)$ for the grid, cylinder, and king graph, and $O(\lambda^{2m})$ for the torus, 
where $\lambda = 1+\sqrt{2} = 2.4142...$\ 
We use this algorithm to count dominating sets on these graphs, where the number of rows $n$ 
and columns $m$ range up to 24. This allows us to extend several OEIS sequences considerably, 
and to obtain high-precision estimates of the growth rate $\mu$, where the number of dominating sets 
on $m \times n$ graphs grows asymptotically as $\mu^{mn}$. We believe that similar techniques can 
be applied to many other periodic graphs based on low-dimensional lattices, and to other kinds of sets 
of interest in graph theory.

\section*{Acknowledgments}

I thank J{\"o}rg Schulenburg and the Computing Center of Otto-von-Guericke
University for their support. I am grateful to the medical team of the Clinic for
Hematology and Oncology of Otto-von-Guericke University, where large
parts of this paper have been written. Above all, my gratitude goes to
my friend Cris Moore for his support and inspiration, not only for this project.

\appendix
\section{The number of signatures}
\label{sec:n_signatures}

Let $a_{\uncovered}(m)$, $a_{\covered}(m)$ and $a_{\occupied}(m)$ denote the number of signatures of length $m$ that end with $\uncovered$, $\covered$, and $\occupied$ respectively. Obviously $a(m) = a_{\uncovered}(m) +a_{\covered}(m)+a_{\occupied}(m)$. We also have
\begin{align*}
  a_{\uncovered}(m) &=  a_{\uncovered}(m-1) + a_{\covered}(m-1)\,, \\
  a_{\covered}(m) &=  a_{\uncovered}(m-1) +
                    a_{\covered}(m-1)+a_{\occupied}(m-1) = a(m-1)\,,
  \\
  a_{\occupied}(m) &= a_{\covered}(m-1) + a_{\occupied}(m-1)\,.
\end{align*}
Adding these three equations yields
\begin{displaymath}
  a(m) = 2 a(m-1) + a_{\covered}(m-1)\,,
\end{displaymath}
and inserting the equation for $a_{\covered}(m-1)$ provides us
with the Pell-type recurrence
\begin{equation}
  \label{eq:pell}
  a(m) = 2 a(m-1) + a(m-2)\,.
\end{equation}
The characteristic polynomial of the recurrence is $P(\lambda) =
\lambda^2-2\lambda -1$ with zeroes $1\pm\sqrt{2}$. Hence the
solution of \eqref{eq:pell} is
\begin{subequations}
  \label{eq:pell-solution}
  \begin{equation}
    \label{eq:pell-solution-1}
    a(m) = A_- \left(1-\sqrt{2}\right)^m + A_+\left(1+\sqrt{2}\right)^m\,,
  \end{equation}
  where the coefficients $A_+$ and $A_-$ are fixed by the base cases
  $a(0)$ and $a(1)$,
  \begin{equation}
    \label{eq:pell-solution-2} 
    A_- = \frac{\left(1+\sqrt{2}\right)a(0) -a(1)}{2\sqrt{2}}\qquad
    A_+ = \frac{a(0)-\left(1-\sqrt{2}\right)a(1)}{2\sqrt{2}}
  \end{equation}
\end{subequations}
In our case, $a(1) = 3$ and $a(2)=7$, which implies $a(0) = 1$ and
yields \eqref{eq:a_signatures}.

As we discussed, we can identify a signature with its mirror image. Taking into account this reflection symmetry, 
the resulting number of signatures is
\begin{equation}
      \label{eq:a_row_signatures_reflection}
      \hat{a}(m) = \frac{1}{2}\Big(a(m) + a(\lfloor\textstyle{\frac{m+1}{2}}\rfloor)\Big)\,,
\end{equation}
which is \seqnum{A030270}\,. This formula is easily
understood. Reflection symmetry gives us a factor of $1/2$ for all
non-symmetric signatures. If we apply the factor $1/2$ to all
signatures, we need to add back the number of symmetric signatures,
which are completely specified by their first half.

For signatures with a kink between $c-1$ and $c$, the recurrence reads
\begin{equation}
  \label{eq:pell-c}
  a_c(m) = \begin{cases}
    3 a_c(m-1) & \text{if $m=c$,}\\
    2 a_c(m-1) + a_c(m-2) & \text{otherwise.}
  \end{cases}
\end{equation}
Obviously, $a_c(m)$ follows \eqref{eq:a_signatures} for $m < c$ and
for $m > c$. Hence, $a_c(m)$ is also solved by
\eqref{eq:pell-solution-1}, but with change of $A_-$ and $A_+$ as $m$
passes $c$. The asymptotic scaling $O(\lambda^m)$ persists.

For cyclic signatures, the derivation of \eqref{eq:c_signatures} is a
bit more involved. Knopfmacher et al.\ \cite{knopfmacher:etal:10} used
Chebyshev polynomials to derive the generating function for
$\overline{a}(m)$. Here we give a more elementary derivation. 

Let $\overline{a}_{\sigma_m,\sigma_1}(m)$ denote the number of cyclic signatures
of length $m$ with values $\sigma_1$ and $\sigma_m$ at their $1$st and $m$th position. Then
\begin{equation}
  \label{eq:cyclic-proof-1}
  \begin{aligned}
    \overline{a}_{\uncovered,\uncovered}(m) & = \overline{a}_{\uncovered,\uncovered}(m-1) +
                                   \overline{a}_{\covered,\uncovered}(m-1)\,, \\
    \overline{a}_{\uncovered,\covered}(m) &=
    \overline{a}_{\uncovered,\covered}(m-1)+\overline{a}_{\covered,\covered}(m-1) \,, \\
    \overline{a}_{\covered,\uncovered}(m)  &= \overline{a}_{\uncovered,\uncovered}(m-1) +
    \overline{a}_{\covered,\uncovered}(m-1) \,, \\
    \overline{a}_{\covered,\covered}(m) &= \overline{a}_{\uncovered,\covered}(m-1)+\overline{a}_{\covered,\covered}(m-1)+\overline{a}_{\occupied,\covered}(m-1) \,, \\
    \overline{a}_{\covered,\occupied}(m) &=  \overline{a}_{\covered,\occupied}(m-1) +  \overline{a}_{\occupied,\occupied}(m-1) \,, \\
    \overline{a}_{\occupied,\covered}(m) &=  \overline{a}_{\uncovered,\uncovered}(m-1) + \overline{a}_{\covered,\uncovered}(m-1)+\overline{a}_{\uncovered,\covered}(m-1) + \overline{a}_{\covered,\covered}(m-1) + \overline{a}_{\occupied,\covered}(m-1) \,, \\
    \overline{a}_{\occupied,\occupied}(m) &= \overline{a}_{\covered,\occupied}(m-1) + \overline{a}_{\occupied,\occupied}(m-1) \,.
  \end{aligned}
\end{equation}
On the right-hand sides, $\overline{a}_{\covered,\occupied}$,
$\overline{a}_{\occupied,\covered}$ and $\overline{a}_{\occupied,\occupied}$ appear twice,
and all other $\sigma$'s appear three times. Hence, adding all these equations yields
\begin{equation}
  \label{eq:cyclic-proof-2}
  \overline{a}(m) = 3 \overline{a}(m-1) - \big[\overline{a}_{\covered,\occupied}(m-1)+\overline{a}_{\occupied,\covered}(m-1)+\overline{a}_{\occupied,\occupied}(m-1)\big]\,.
\end{equation}
When we apply the recurrence \eqref{eq:cyclic-proof-1} to the terms in brackets, we
notice that $\overline{a}_{\covered,\occupied}$ and $\overline{a}_{\occupied\occupied}$
appear twice, and all other $\sigma$'s appear exactly once. This gives 
\begin{equation}
  \label{eq:cyclic-proof-3}
  [\cdots] = \overline{a}(m-2) +\big\{ \overline{a}_{\covered,\occupied}(m-2)+\overline{a}_{\occupied\occupied}(m-2)\big\}\,.
\end{equation}
If we apply \eqref{eq:cyclic-proof-1} to the terms in the
curly brackets, we get $\{\cdots\} = \overline{a}(m-3)$, and finally 
\begin{equation}
  \label{eq:cyclic-recurrence}
  \overline{a}(m) = 3 \overline{a}(m-1) - \overline{a}(m-2) -\overline{a}(m-3)\,.
\end{equation}
The characteristic polynomial of this recurrence is
\begin{equation}
  \label{eq:cyclic-characteristic}
  P(\lambda) = \lambda^3 - 3\lambda^2+\lambda + 1 = (\lambda-1)(\lambda^2-2\lambda-1)\,,
\end{equation}
with zeroes $1$, $1-\sqrt{2}$, and $1+\sqrt{2}$. Hence the solution of
\eqref{eq:cyclic-recurrence} is
\begin{equation}
  \label{eq:1}
  \overline{a}(m) = C_1 + C_- \left(1-\sqrt{2}\right)^m + C_+ \left(1+\sqrt{2}\right)^m\,,
\end{equation}
where $C_1$, $C_-$, and $C_+$ depend on the base case $\overline{a}(0)$, $\overline{a}(1)$, 
and $\overline{a}(2)$:
\begin{equation}
  \label{eq:cyclic-base-cases}
  \begin{aligned}
    C_1 &= \frac{1}{2} \overline{a}(0) + \overline{a}(1) - \frac{1}{2} \overline{a}(2)\,, \\
    C_- &= \frac{2+\sqrt{2}}{4\sqrt{2}} \overline{a}(0) -
    \frac{2+2\sqrt{2}}{4\sqrt{2}} \overline{a}(1) + \frac{1}{4} \overline{a}(2)\,, \\
    C_+ &= -\frac{2-\sqrt{2}}{4\sqrt{2}} \overline{a}(0) + \frac{2-2\sqrt{2}}{4\sqrt{2}} \overline{a}(1) +
    \frac{1}{4} \overline{a}(2)\,.
  \end{aligned}
\end{equation}
In our case we have $\overline{a}(1)=3$, $\overline{a}(2)=7$ and $\overline{a}(3)=15$ which implies
$\overline{a}(0)=3$ and therefore $C_1=C_-=C_+=1$, which gives
\eqref{eq:c_signatures}.

If one takes into account circular and reflection symmetry, the number
of signatures is approximately $\overline{a}(m)/2m$, as can be checked
by dividing \seqnum{A208716} by \seqnum{A124696}.

\section{Domination polynomials}
\label{sec:showcase}

Tables \ref{tab:nxn-grid}, \ref{tab:nxn-cylinder}, \ref{tab:nxn-torus}
and \ref{tab:nxn-king} show the domination polynomials of the $n\times
n$ grid, cylinder, torus and king graph for $n\leq 8$. The domination polynomials
for larger and rectangular graphs can be downloaded from the author's website.

\begin{table}
  \centering
  {\small
  \setlength{\tabcolsep}{1pt}
  \begin{tabular}{rcl}
    $G_{1\times 1}(z)$ & $=$ & $z$ \\[0.8ex]
    $G_{2\times 2}(z)$ & $=$ & $\numprint{6}\,z^{2} +\numprint{4}\,z^{3} + z^4$\\[0.8ex]
    $G_{3\times 3}(z)$ & $=$ & $\numprint{10}\,z^3 + \numprint{57}\,z^4+\numprint{98}\,z^5+ \numprint{80}\,z^6+ \numprint{36}\,z^7+\numprint{9}\,z^8+z^9$\\[0.8ex]
    $G_{4\times 4}(z)$ & $=$ & $
                             \numprint{20}\,z^{4} +
                             \numprint{40}\,z^{5} +
                             \numprint{554}\,z^{6} +
                             \numprint{2484}\,z^{7} +
                             \numprint{5494}\,z^{8} +
                             \numprint{7268}\,z^{9} +
                             \numprint{6402}\,z^{10} + $ \\ & & $
                             \numprint{3964}\,z^{11} +
                             \numprint{1760}\,z^{12} +
                             \numprint{556}\,z^{13} +
                             \numprint{120}\,z^{14} +
                             \numprint{16}\,z^{15} +
                             z^{16}$ \\[0.8ex]
    $G_{5\times 5}(z)$ & $=$ & $
                             \numprint{22}\,z^{7} +
                             \numprint{1545}\,z^{8} +
                             \numprint{22594}\,z^{9} +
                             \numprint{140304}\,z^{10} +
                             \numprint{492506}\,z^{11} +
                             \numprint{1126091}\,z^{12} + $ \\ & & $
                             \numprint{1823057}\,z^{13} +
                             \numprint{2204694}\,z^{14} +
                             \numprint{2063202}\,z^{15} +
                             \numprint{1528544}\,z^{16} +
                             \numprint{908623}\,z^{17} + $ \\ & & $
                             \numprint{435832}\,z^{18} +
                             \numprint{168426}\,z^{19} +
                             \numprint{51953}\,z^{20} +
                             \numprint{12550}\,z^{21} +
                             \numprint{2296}\,z^{22} +
                             \numprint{300}\,z^{23} + $ \\ & & $
                             \numprint{25}\,z^{24} +
                             z^{25}$ \\[0.8ex]
    $G_{6\times 6}(z)$ & $=$ & $
                             \numprint{288}\, z^{10} + 
                             \numprint{20896}\, z^{11} +
                             \numprint{478624}\, z^{12} +
                             \numprint{5119512}\, z^{13} +
                             \numprint{32070018}\, z^{14} + $\\ & & $
                             \numprint{133299396}\, z^{15} + 
                             \numprint{397278079}\, z^{16} +
                             \numprint{894777804}\, z^{17} +
                             \numprint{1581325412}\, z^{18} + $\\ & & $
                             \numprint{2254665800}\, z^{19} +
                             \numprint{2648227540}\, z^{20} +
                             \numprint{2602834832}\, z^{21} +
                             \numprint{2165708332}\, z^{22} + $\\ & & $
                             \numprint{1538223528}\, z^{23} +
                             \numprint{937732160}\, z^{24} +
                             \numprint{492091912}\, z^{25} +
                             \numprint{222401360}\, z^{26} + $\\ & & $
                             \numprint{86397060}\, z^{27} +
                             \numprint{28715172}\, z^{28} +
                             \numprint{8101900}\, z^{29} +
                             \numprint{1917814}\, z^{30} +
                             \numprint{374360}\, z^{31} + $\\ & & $
                             \numprint{58757}\, z^{32} +
                             \numprint{7136}\, z^{33} +
                             \numprint{630}\, z^{34} +
                             \numprint{36}\, z^{35} +
                                                                  z^{36} $ \\[0.8ex]
    $G_{7\times 7}(z)$ & $=$ & $
                         \numprint{2}\,z^{12} +
                         \numprint{682}\,z^{13} +
                         \numprint{69818}\,z^{14} +
                         \numprint{2809634}\,z^{15} +
                         \numprint{58346490}\,z^{16} +
                         \numprint{722332499}\,z^{17} + $\\ & & $
                         \numprint{5873091754}\,z^{18} +
                         \numprint{33720209068}\,z^{19} +
                         \numprint{144326231696}\,z^{20} +
                         \numprint{479699210510}\,z^{21} + $\\ & & $
                         \numprint{1277484819726}\,z^{22} +
                         \numprint{2793279785490}\,z^{23} +
                         \numprint{5112738876944}\,z^{24} + $\\ & & $
                         \numprint{7956389260884}\,z^{25} +
                         \numprint{10659803571300}\,z^{26} +
                         \numprint{12421321161300}\,z^{27} + $\\ & & $
                         \numprint{12692372752380}\,z^{28} +
                         \numprint{11448278299084}\,z^{29} +
                         \numprint{9162679913216}\,z^{30} + $\\ & & $
                         \numprint{6533166152352}\,z^{31} +
                         \numprint{4161998104421}\,z^{32} +
                         \numprint{2373420930490}\,z^{33} + $\\ & & $
                         \numprint{1212661131156}\,z^{34} +
                         \numprint{555107862078}\,z^{35} +
                         \numprint{227428059844}\,z^{36} + $\\ & & $
                         \numprint{83222666789}\,z^{37} +
                         \numprint{27112560820}\,z^{38} +
                         \numprint{7828049130}\,z^{39} +
                         \numprint{1990771673}\,z^{40} + $\\ & & $
                         \numprint{442325654}\,z^{41} +
                         \numprint{84949536}\,z^{42} +
                         \numprint{13902582}\,z^{43} +
                         \numprint{1901827}\,z^{44} +
                         \numprint{211672}\,z^{45} + $\\ & & $
                         \numprint{18420}\,z^{46} +
                         \numprint{1176}\,z^{47} +
                         \numprint{49}\,z^{48} +
                         z^{49} $\\[0.8ex]
    $G_{8\times 8}(z)$ & $=$ & $
                         \numprint{52}\,z^{16} +
                         \numprint{15864}\,z^{17} +
                         \numprint{1722568}\,z^{18} +
                         \numprint{88226896}\,z^{19} +
                         \numprint{2530732136}\,z^{20} + $\\ & & $
                         \numprint{45375987524}\,z^{21} +
                         \numprint{550599054884}\,z^{22} +
                         \numprint{4804379992724}\,z^{23} + $\\ & & $
                         \numprint{31600623255338}\,z^{24} +
                         \numprint{162562260288736}\,z^{25} +
                         \numprint{673394654370166}\,z^{26} + $\\ & & $
                         \numprint{2299264864482900}\,z^{27} +
                         \numprint{6594998844457680}\,z^{28} +
                         \numprint{16140569091024412}\,z^{29} + $\\ & & $
                         \numprint{34145122808773410}\,z^{30} +
                         \numprint{63119173723897716}\,z^{31} +
                         \numprint{102895753969864066}\,z^{32} + $\\ & & $
                         \numprint{149077597217535156}\,z^{33} +
                         \numprint{193230536934785376}\,z^{34} +
                         \numprint{225335102676614928}\,z^{35} + $\\ & & $
                         \numprint{237544411406921016}\,z^{36} +
                         \numprint{227287805873540304}\,z^{37} +
                         \numprint{198057834976389932}\,z^{38} + $\\ & & $
                         \numprint{157618769172704668}\,z^{39} +
                         \numprint{114817612849042346}\,z^{40} +
                         \numprint{76694678728213904}\,z^{41} + $\\ & & $
                         \numprint{47038041070108638}\,z^{42} +
                         \numprint{26511846459068480}\,z^{43} +
                         \numprint{13738205846668894}\,z^{44} + $\\ & & $
                         \numprint{6545243405852040}\,z^{45} +
                         \numprint{2865791004809792}\,z^{46} +
                         \numprint{1152143554074948}\,z^{47} + $\\ & & $
                         \numprint{424740089888210}\,z^{48} +
                         \numprint{143310533096044}\,z^{49} +
                         \numprint{44147026143576}\,z^{50} + $\\ & & $
                         \numprint{12377560349296}\,z^{51} +
                         \numprint{3146185878694}\,z^{52} +
                         \numprint{721528535044}\,z^{53} + $\\ & & $
                         \numprint{148407392344}\,z^{54} +
                         \numprint{27176088292}\,z^{55} +
                         \numprint{4389826708}\,z^{56} +
                         \numprint{618261932}\,z^{57} + $\\ & & $
                         \numprint{74786314}\,z^{58} +
                         \numprint{7615724}\,z^{59} +
                         \numprint{635108}\,z^{60} +
                         \numprint{41660}\,z^{61} +
                         \numprint{2016}\,z^{62} +
                         \numprint{64}\,z^{63} +
                         z^{64}$
  \end{tabular}}
  \caption{Domination polynomials of the grid graph $G_{n\times n}$.}
  \label{tab:nxn-grid}
\end{table}

\begin{table}
  \centering
  {\small
  \setlength{\tabcolsep}{1pt}
  \begin{tabular}{rcl}
      $G_{\overline{1}\times 1}(z)$ & $=$ & $z$ \\[0.8ex]
    $G_{\overline{2}\times 2}(z)$ & $=$ & $\numprint{6}\,z^{2} +\numprint{4}\,z^{3} +
                         z^{4}$\\[0.8ex]
    $G_{\overline{3} \times 3}(z)$ & $=$ & $\numprint{34}\,z^{3} +
\numprint{99}\,z^{4} +
\numprint{120}\,z^{5} +
\numprint{84}\,z^{6} +
\numprint{36}\,z^{7} +
\numprint{9}\,z^{8} +
                                z^{9} $\\[0.8ex]
    $G_{\overline{4}\times 4}(z)$ & $=$ & $
                               \numprint{16}\,z^{4} +
\numprint{248}\,z^{5} +
\numprint{1560}\,z^{6} +
\numprint{4752}\,z^{7} +
\numprint{8308}\,z^{8} +
\numprint{9376}\,z^{9} +
\numprint{7404}\,z^{10} +
\numprint{4264}\,z^{11} +
\numprint{1812}\,z^{12} + $\\ & & $
\numprint{560}\,z^{13} +
\numprint{120}\,z^{14} +
\numprint{16}\,z^{15} +
z^{16}$
    \\[0.8ex]
    $G_{\overline{5}\times 5}(z)$ & $=$ & $
                               \numprint{320}\,z^{7} +
\numprint{8525}\,z^{8} +
\numprint{77240}\,z^{9} +
\numprint{354768}\,z^{10} +
\numprint{1000860}\,z^{11} +
\numprint{1934895}\,z^{12} +
\numprint{2744825}\,z^{13} + $\\ & & $
\numprint{2988230}\,z^{14} +
\numprint{2571838}\,z^{15} +
\numprint{1783400}\,z^{16} +
\numprint{1007095}\,z^{17} +
\numprint{464780}\,z^{18} +
\numprint{174710}\,z^{19} +$\\ & & $
\numprint{52905}\,z^{20} +
\numprint{12640}\,z^{21} +
\numprint{2300}\,z^{22} +
\numprint{300}\,z^{23} +
\numprint{25}\,z^{24} +
z^{25}$
    \\[0.8ex]
    $G_{\overline{6}\times 6}(z)$ & $=$ & $
                               \numprint{36}\,z^{9} +
\numprint{5304}\,z^{10} +
\numprint{182640}\,z^{11} +
\numprint{2674472}\,z^{12} +
\numprint{20888976}\,z^{13} +
\numprint{102474888}\,z^{14} +
\numprint{349290996}\,z^{15} +$\\ & & $
\numprint{883272549}\,z^{16} +
\numprint{1733585388}\,z^{17} +
\numprint{2727960890}\,z^{18} +
\numprint{3525246624}\,z^{19} +
\numprint{3808843866}\,z^{20} +$\\ & & $
\numprint{3487178896}\,z^{21} +
\numprint{2732164086}\,z^{22} +
\numprint{1844521704}\,z^{23} +
\numprint{1077669852}\,z^{24} +
\numprint{545975556}\,z^{25} +$\\ & & $
\numprint{239780520}\,z^{26} +
\numprint{91042704}\,z^{27} +
\numprint{29727648}\,z^{28} +
\numprint{8277408}\,z^{29} +
\numprint{1941108}\,z^{30} +
\numprint{376584}\,z^{31} +$\\ & & $
\numprint{58893}\,z^{32} +
\numprint{7140}\,z^{33} +
\numprint{630}\,z^{34} +
\numprint{36}\,z^{35} +
z^{36}
    $\\[0.8ex]
    $G_{\overline{7}\times 7}(z)$ & $=$ & $
                               \numprint{56}\,z^{12} +
\numprint{17878}\,z^{13} +
\numprint{1155252}\,z^{14} +
\numprint{31054898}\,z^{15} +
\numprint{456455958}\,z^{16} +
\numprint{4228396193}\,z^{17} +$\\ & & $
\numprint{27003670764}\,z^{18} +
\numprint{126567019852}\,z^{19} +
\numprint{455787743684}\,z^{20} +
\numprint{1305495024212}\,z^{21} +$\\ & & $
\numprint{3054799279140}\,z^{22} +
\numprint{5964099864170}\,z^{23} +
\numprint{9880494881782}\,z^{24} +
\numprint{14079356852554}\,z^{25} +$\\ & & $
\numprint{17447648954876}\,z^{26} +
\numprint{18972152485706}\,z^{27} +
\numprint{18232693610636}\,z^{28} +
\numprint{15575358475348}\,z^{29} +$\\ & & $
\numprint{11880424274852}\,z^{30} +
\numprint{8119023303202}\,z^{31} +
\numprint{4982943200557}\,z^{32} +
\numprint{2750423714766}\,z^{33} +$\\ & & $
\numprint{1366055406058}\,z^{34} +
\numprint{610263826646}\,z^{35} +
\numprint{244883991996}\,z^{36} +
\numprint{88057328933}\,z^{37} +$\\ & & $
\numprint{28275236934}\,z^{38} +
\numprint{8068294570}\,z^{39} +
\numprint{2032827433}\,z^{40} +
\numprint{448443744}\,z^{41} +
\numprint{85669472}\,z^{42} +$\\ & & $
\numprint{13968430}\,z^{43} +
\numprint{1906219}\,z^{44} +
\numprint{211862}\,z^{45} +
\numprint{18424}\,z^{46} +
\numprint{1176}\,z^{47} +
\numprint{49}\,z^{48} +
z^{49}
    $\\[0.8ex]
    $G_{\overline{8}\times 8}(z)$ & $=$ & $
                               \numprint{5556}\,z^{16} +
\numprint{877312}\,z^{17} +
\numprint{53209280}\,z^{18} +
\numprint{1705112768}\,z^{19} +
\numprint{33445432384}\,z^{20} +
\numprint{439072279040}\,z^{21} +$\\ & & $
\numprint{4109617399080}\,z^{22} +
\numprint{28780589281584}\,z^{23} +
\numprint{156652617731416}\,z^{24} +
\numprint{683114966762944}\,z^{25} +$\\ & & $
\numprint{2445690796232104}\,z^{26} +
\numprint{7333807159180640}\,z^{27} +
\numprint{18724721152985788}\,z^{28} +$\\ & & $
\numprint{41265837337782160}\,z^{29} +
\numprint{79400630946848664}\,z^{30} +
\numprint{134680399945312528}\,z^{31} +$\\ & & $
\numprint{203039926797499914}\,z^{32} +
\numprint{273950585370935584}\,z^{33} +
\numprint{332770579433142856}\,z^{34} +$\\ & & $
\numprint{365749751152851088}\,z^{35} +
\numprint{365293505626221476}\,z^{36} +
\numprint{332720567077905776}\,z^{37} +$\\ & & $
\numprint{277203692560942216}\,z^{38} +
\numprint{211771844116575568}\,z^{39} +
\numprint{148641968502148908}\,z^{40} +$\\ & & $
\numprint{96000555048304144}\,z^{41} +
\numprint{57112559682929880}\,z^{42} +
\numprint{31318418200248960}\,z^{43} +$\\ & & $
\numprint{15833769466628176}\,z^{44} +
\numprint{7379242217245312}\,z^{45} +
\numprint{3168290754707192}\,z^{46} +$\\ & & $
\numprint{1251914193916144}\,z^{47} +
\numprint{454574372292346}\,z^{48} +
\numprint{151368545763424}\,z^{49} +$\\ & & $
\numprint{46103561935240}\,z^{50} +
\numprint{12802119434064}\,z^{51} +
\numprint{3227917903348}\,z^{52} +
\numprint{735359555024}\,z^{53} +$\\ & & $
\numprint{150440930640}\,z^{54} +
\numprint{27431963344}\,z^{55} +
\numprint{4416833096}\,z^{56} +
\numprint{620587536}\,z^{57} +
\numprint{74943232}\,z^{58} +$\\ & & $
\numprint{7623504}\,z^{59} +
\numprint{635360}\,z^{60} +
\numprint{41664}\,z^{61} +
\numprint{2016}\,z^{62} +
\numprint{64}\,z^{63} +
z^{64}
    $\\[0.8ex]
  \end{tabular}}
  \caption{Domination polynomials of the cylinder graph
    $G_{\overline{n}\times n}$.}
  \label{tab:nxn-cylinder}
\end{table}

\begin{table}
  \centering
  {\small
  \setlength{\tabcolsep}{1pt}
  \begin{tabular}{rcl}
    $G_{\overline{1}\times\overline{1}}(z)$ & $=$ & $z$ \\[0.8ex]
    $G_{\overline{2}\times\overline{2}}(z)$ & $=$ & $
                               \numprint{6}\,z^{2} +
                               \numprint{4}\,z^{3} +
                               z^{4}
                               $\\[0.8ex]
    $G_{\overline{3}\times\overline{3}}(z)$ & $=$  & $
                                \numprint{48}\,z^{3} +
                                \numprint{117}\,z^{4} +
                                \numprint{126}\,z^{5} +
                                \numprint{84}\,z^{6} +
                                \numprint{36}\,z^{7} +
                                \numprint{9}\,z^{8} +
                                z^{9}
                                $\\[0.8ex]
    $G_{\overline{4}\times\overline{4}}(z)$ & $=$  & $
                                \numprint{40}\,z^{4} +
                                \numprint{560}\,z^{5} +
                                \numprint{2736}\,z^{6} +
                                \numprint{6800}\,z^{7} +
                                \numprint{10310}\,z^{8} +
                                \numprint{10560}\,z^{9} +
                                \numprint{7832}\,z^{10} +
                                \numprint{4352}\,z^{11} +
                                \numprint{1820}\,z^{12} +$\\ & & $
                                \numprint{560}\,z^{13} +
                                \numprint{120}\,z^{14} +
                                \numprint{16}\,z^{15} +
                                z^{16}
    $\\[0.8ex]
    $G_{\overline{5}\times\overline{5}}(z)$ & $=$  & $
                                \numprint{10}\,z^{5} +
                                \numprint{200}\,z^{6} +
                                \numprint{3050}\,z^{7} +
                                \numprint{31525}\,z^{8} +
                                \numprint{188700}\,z^{9} +
                                \numprint{677690}\,z^{10} +
                                \numprint{1610700}\,z^{11} +
                                \numprint{2740775}\,z^{12} +$\\ & & $
                                \numprint{3527075}\,z^{13} +
                                \numprint{3562700}\,z^{14} +
                                \numprint{2895610}\,z^{15} +
                                \numprint{1923600}\,z^{16} +
                                \numprint{1053175}\,z^{17} +
                                \numprint{475950}\,z^{18} +$\\ & & $
                                \numprint{176600}\,z^{19} +
                                \numprint{53105}\,z^{20} +
                                \numprint{12650}\,z^{21} +
                                \numprint{2300}\,z^{22} +
                                \numprint{300}\,z^{23} +
                                \numprint{25}\,z^{24} +
                                z^{25}
    $\\[0.8ex]
    $G_{\overline{6}\times\overline{6}}(z)$ & $=$  & $
                                \numprint{18}\,z^{8} +
                                \numprint{792}\,z^{9} +
                                \numprint{42480}\,z^{10} +
                                \numprint{901692}\,z^{11} +
                                \numprint{9417660}\,z^{12} +
                                \numprint{57622212}\,z^{13} +
                                \numprint{234273096}\,z^{14} +$\\ & & $
                                \numprint{686972304}\,z^{15} +
                                \numprint{1535339241}\,z^{16} +
                                \numprint{2718976500}\,z^{17} +
                                \numprint{3925148718}\,z^{18} +
                                \numprint{4717557288}\,z^{19} +$\\ & & $
                                \numprint{4795710066}\,z^{20} +
                                \numprint{4172271408}\,z^{21} +
                                \numprint{3133155636}\,z^{22} +
                                \numprint{2042728812}\,z^{23} +
                                \numprint{1160244930}\,z^{24} +$\\ & & $
                                \numprint{574802640}\,z^{25} +
                                \numprint{248126706}\,z^{26} +
                                \numprint{93014644}\,z^{27} +
                                \numprint{30098664}\,z^{28} +
                                \numprint{8330940}\,z^{29} +
                                \numprint{1946676}\,z^{30} +$\\ & & $
                                \numprint{376956}\,z^{31} +
                                \numprint{58905}\,z^{32} +
                                \numprint{7140}\,z^{33} +
                                \numprint{630}\,z^{34} +
                                \numprint{36}\,z^{35} +
                                z^{36}
    $\\[0.8ex]
    $G_{\overline{7}\times\overline{7}}(z)$ & $=$  & $
                                \numprint{686}\,z^{12} +
                                \numprint{205996}\,z^{13} +
                                \numprint{9203432}\,z^{14} +
                                \numprint{182205912}\,z^{15} +
                                \numprint{2082222660}\,z^{16} +
                                \numprint{15633666139}\,z^{17} +$\\ & & $
                                \numprint{83589101666}\,z^{18} +
                                \numprint{336543504122}\,z^{19} +
                                \numprint{1062883834964}\,z^{20} +
                                \numprint{2715977010936}\,z^{21} +$\\ & & $
                                \numprint{5751616552262}\,z^{22} +
                                \numprint{10287521966512}\,z^{23} +
                                \numprint{15778748654928}\,z^{24} +
                                \numprint{21007961215738}\,z^{25} +$\\ & & $
                                \numprint{24521234114524}\,z^{26} +
                                \numprint{25294410442980}\,z^{27} +
                                \numprint{23207364109062}\,z^{28} +
                                \numprint{19035405413402}\,z^{29} +$\\ & & $
                                \numprint{14013460448554}\,z^{30} +
                                \numprint{9286179999558}\,z^{31} +
                                \numprint{5549897026821}\,z^{32} +
                                \numprint{2994639956448}\,z^{33} +$\\ & & $
                                \numprint{1459111542322}\,z^{34} +
                                \numprint{641506327014}\,z^{35} +
                                \numprint{254073916530}\,z^{36} +
                                \numprint{90407322159}\,z^{37} +$\\ & & $
                                \numprint{28792214486}\,z^{38} +
                                \numprint{8164773470}\,z^{39} +
                                \numprint{2047811969}\,z^{40} +
                                \numprint{450329306}\,z^{41} +
                                \numprint{85854230}\,z^{42} +$\\ & & $
                                \numprint{13981660}\,z^{43} +
                                \numprint{1906835}\,z^{44} +
                                \numprint{211876}\,z^{45} +
                                \numprint{18424}\,z^{46} +
                                \numprint{1176}\,z^{47} +
                                \numprint{49}\,z^{48} +
                                z^{49}
    $\\[0.8ex]
    $G_{\overline{8}\times\overline{8}}(z)$ & $=$  & $
                                \numprint{129224}\,z^{16} +
                                \numprint{14681344}\,z^{17} +
                                \numprint{651801600}\,z^{18} +
                                \numprint{15758203520}\,z^{19} +
                                \numprint{240372029072}\,z^{20} +$\\ & & $
                                \numprint{2528654078528}\,z^{21} +
                                \numprint{19500205324032}\,z^{22} +
                                \numprint{115290942264448}\,z^{23} +
                                \numprint{540832229850464}\,z^{24} +$\\ & & $
                                \numprint{2068173372971840}\,z^{25} +
                                \numprint{6588920903240288}\,z^{26} +
                                \numprint{17801592852676672}\,z^{27} +$\\ & & $
                                \numprint{41390172398524272}\,z^{28} +
                                \numprint{83839998055557568}\,z^{29} +
                                \numprint{149484557713246144}\,z^{30} +$\\ & & $
                                \numprint{236656119110649024}\,z^{31} +
                                \numprint{335142837708961654}\,z^{32} +
                                \numprint{427236939021347072}\,z^{33} +$\\ & & $
                                \numprint{492905450386702720}\,z^{34} +
                                \numprint{517004156810313664}\,z^{35} +
                                \numprint{494919960091734336}\,z^{36} +$\\ & & $
                                \numprint{433802866482847616}\,z^{37} +
                                \numprint{349085443267295680}\,z^{38} +
                                \numprint{258463881482739136}\,z^{39} +$\\ & & $
                                \numprint{176377167134882296}\,z^{40} +
                                \numprint{111074953233247104}\,z^{41} +
                                \numprint{64609763870627264}\,z^{42} +$\\ & & $
                                \numprint{34728863089747456}\,z^{43} +
                                \numprint{17251322181046784}\,z^{44} +
                                \numprint{7916762958356992}\,z^{45} +$\\ & & $
                                \numprint{3353820958699552}\,z^{46} +
                                \numprint{1310034044881664}\,z^{47} +
                                \numprint{471036957313244}\,z^{48} +$\\ & & $
                                \numprint{155565089543040}\,z^{49} +
                                \numprint{47060663909504}\,z^{50} +
                                \numprint{12995994842880}\,z^{51} +
                                \numprint{3262480436912}\,z^{52} +$\\ & & $
                                \numprint{740719463168}\,z^{53} +
                                \numprint{151153208768}\,z^{54} +
                                \numprint{27511470912}\,z^{55} +
                                \numprint{4424085048}\,z^{56} +
                                \numprint{621106688}\,z^{57} +$\\ & & $
                                \numprint{74970592}\,z^{58} +
                                \numprint{7624448}\,z^{59} +
                                \numprint{635376}\,z^{60} +
                                \numprint{41664}\,z^{61} +
                                \numprint{2016}\,z^{62} +
                                \numprint{64}\,z^{63} +
                                z^{64}
                                $
 \end{tabular}}
  \caption{Domination polynomials of the torus graph $G_{\overline{n}\times\overline{n}}$.}
  \label{tab:nxn-torus}
\end{table}

\begin{table}
  \centering
  {\small
  \setlength{\tabcolsep}{1pt}
  \begin{tabular}{rcl}
    $K_{1\times 1}(z)$ & $=$ & $z$ \\[0.8ex]
    $K_{2\times 2}(z)$ & $=$  & $
                                \numprint{4}\,z^{1} +
                                \numprint{6}\,z^{2} +
                                \numprint{4}\,z^{3} +
                                z^{4}
    $\\[0.8ex]
    $K_{3\times 3}(z)$ & $=$  & $
                                z^{1} +
                                \numprint{10}\,z^{2} +
                                \numprint{48}\,z^{3} +
                                \numprint{106}\,z^{4} +
                                \numprint{122}\,z^{5} +
                                \numprint{84}\,z^{6} +
                                \numprint{36}\,z^{7} +
                                \numprint{9}\,z^{8} +
                                z^{9}
    $\\[0.8ex]
    $K_{4\times 4}(z)$ & $=$  & $
                                \numprint{256}\,z^{4} +
                                \numprint{1536}\,z^{5} +
                                \numprint{4480}\,z^{6} +
                                \numprint{8320}\,z^{7} +
                                \numprint{10896}\,z^{8} +
                                \numprint{10560}\,z^{9} +
                                \numprint{7744}\,z^{10} +
                                \numprint{4320}\,z^{11} +
                                \numprint{1816}\,z^{12} +$\\ & & $
                                \numprint{560}\,z^{13} +
                                \numprint{120}\,z^{14} +
                                \numprint{16}\,z^{15} +
                                z^{16}
    $\\[0.8ex]
    $K_{5\times 5}(z)$ & $=$  & $
                                \numprint{79}\,z^{4} +
                                \numprint{1593}\,z^{5} +
                                \numprint{14672}\,z^{6} +
                                \numprint{81524}\,z^{7} +
                                \numprint{307244}\,z^{8} +
                                \numprint{842506}\,z^{9} +
                                \numprint{1764068}\,z^{10} +
                                \numprint{2918828}\,z^{11} +$\\ & & $
                                \numprint{3909834}\,z^{12} +
                                \numprint{4311034}\,z^{13} +
                                \numprint{3955232}\,z^{14} +
                                \numprint{3038092}\,z^{15} +
                                \numprint{1957940}\,z^{16} +
                                \numprint{1056965}\,z^{17} +$\\ & & $
                                \numprint{475304}\,z^{18} +
                                \numprint{176256}\,z^{19} +
                                \numprint{53046}\,z^{20} +
                                \numprint{12646}\,z^{21} +$\\ & & $
                                \numprint{2300}\,z^{22} +
                                \numprint{300}\,z^{23} +
                                \numprint{25}\,z^{24} +
                                z^{25}
    $\\[0.8ex]
    $K_{6\times 6}(z)$ & $=$  & $
                                z^{4} +
                                \numprint{56}\,z^{5} +
                                \numprint{1652}\,z^{6} +
                                \numprint{31664}\,z^{7} +
                                \numprint{404770}\,z^{8} +
                                \numprint{3416472}\,z^{9} +
                                \numprint{19840300}\,z^{10} +
                                \numprint{84209540}\,z^{11} +$\\ & & $
                                \numprint{275031868}\,z^{12} +
                                \numprint{718655796}\,z^{13} +
                                \numprint{1546177306}\,z^{14} +
                                \numprint{2797874908}\,z^{15} +
                                \numprint{4326011372}\,z^{16} +$\\ & & $
                                \numprint{5782863816}\,z^{17} +
                                \numprint{6741695574}\,z^{18} +
                                \numprint{6897654436}\,z^{19} +
                                \numprint{6220635186}\,z^{20} +
                                \numprint{4958580672}\,z^{21} +$\\ & & $
                                \numprint{3498131846}\,z^{22} +
                                \numprint{2184049652}\,z^{23} +
                                \numprint{1205216450}\,z^{24} +
                                \numprint{586259808}\,z^{25} +
                                \numprint{250349560}\,z^{26} +$\\ & & $
                                \numprint{93305796}\,z^{27} +
                                \numprint{30113038}\,z^{28} +
                                \numprint{8327600}\,z^{29} +
                                \numprint{1945800}\,z^{30} +
                                \numprint{376864}\,z^{31} +
                                \numprint{58901}\,z^{32} +$\\ & & $
                                \numprint{7140}\,z^{33} +
                                \numprint{630}\,z^{34} +
                                \numprint{36}\,z^{35} +
                                z^{36}
    $\\[0.8ex]
    $K_{7\times 7}(z)$ & $=$  & $
                                \numprint{243856}\,z^{9} +
                                \numprint{7483274}\,z^{10} +
                                \numprint{108525780}\,z^{11} +
                                \numprint{995661210}\,z^{12} +
                                \numprint{6526376452}\,z^{13} +
                                \numprint{32723647242}\,z^{14} +$\\ & & $
                                \numprint{131188032404}\,z^{15} +
                                \numprint{433817785292}\,z^{16} +
                                \numprint{1211009331050}\,z^{17} +
                                \numprint{2904839371392}\,z^{18} +$\\ & & $
                                \numprint{6071176663246}\,z^{19} +
                                \numprint{11178937768294}\,z^{20} +
                                \numprint{18295752974580}\,z^{21} +$\\ & & $
                                \numprint{26804759801972}\,z^{22} +
                                \numprint{35356180710524}\,z^{23} +
                                \numprint{42178267079370}\,z^{24} +$\\ & & $
                                \numprint{45670952317403}\,z^{25} +
                                \numprint{45011034604106}\,z^{26} +
                                \numprint{40458849573846}\,z^{27} +$\\ & & $
                                \numprint{33215036685152}\,z^{28} +
                                \numprint{24925366211032}\,z^{29} +
                                \numprint{17102403546926}\,z^{30} +$\\ & & $
                                \numprint{10726989678404}\,z^{31} +
                                \numprint{6145751104023}\,z^{32} +
                                \numprint{3212103217512}\,z^{33} +$\\ & & $
                                \numprint{1528690222560}\,z^{34} +
                                \numprint{660843701416}\,z^{35} +
                                \numprint{258681402216}\,z^{36} +
                                \numprint{91330527514}\,z^{37} +$\\ & & $
                                \numprint{28943075360}\,z^{38} +
                                \numprint{8183779088}\,z^{39} +
                                \numprint{2049421399}\,z^{40} +
                                \numprint{450371272}\,z^{41} +
                                \numprint{85843308}\,z^{42} +$\\ & & $
                                \numprint{13979844}\,z^{43} +
                                \numprint{1906704}\,z^{44} +
                                \numprint{211872}\,z^{45} +
                                \numprint{18424}\,z^{46} +
                                \numprint{1176}\,z^{47} +
                                \numprint{49}\,z^{48} +
                                z^{49}
    $\\[0.8ex]
    $K_{8\times 8}(z)$ & $=$  & $
                                \numprint{3600}\,z^{9} +
                                \numprint{260234}\,z^{10} +
                                \numprint{9161844}\,z^{11} +
                                \numprint{205624178}\,z^{12} +
                                \numprint{3259026956}\,z^{13} +
                                \numprint{38509091104}\,z^{14} +$\\ & & $
                                \numprint{351743132940}\,z^{15} +
                                \numprint{2555393428502}\,z^{16} +
                                \numprint{15128696395436}\,z^{17} +
                                \numprint{74541297707306}\,z^{18} +$\\ & & $
                                \numprint{311267686259112}\,z^{19} +
                                \numprint{1118844024839124}\,z^{20} +
                                \numprint{3507981273108664}\,z^{21} +$\\ & & $
                                \numprint{9702498525018636}\,z^{22} +
                                \numprint{23899882018866672}\,z^{23} +
                                \numprint{52858603217834524}\,z^{24} +$\\ & & $
                                \numprint{105690774510597180}\,z^{25} +
                                \numprint{192179344747568048}\,z^{26} +
                                \numprint{319368084410733612}\,z^{27} +$\\ & & $
                                \numprint{487117660190269044}\,z^{28} +
                                \numprint{684379499046113744}\,z^{29} +
                                \numprint{888386977466277426}\,z^{30} +$\\ & & $
                                \numprint{1068217222601672912}\,z^{31} +
                                \numprint{1192321377072934280}\,z^{32} +
                                \numprint{1237548909927735548}\,z^{33} +$\\ & & $
                                \numprint{1196127084749768650}\,z^{34} +
                                \numprint{1077740592175963352}\,z^{35} +
                                \numprint{905994491238380692}\,z^{36} +$\\ & & $
                                \numprint{710965651477267076}\,z^{37} +
                                \numprint{520969168389552836}\,z^{38} +
                                \numprint{356483920242132856}\,z^{39} +$\\ & & $
                                \numprint{227748014955114180}\,z^{40} +
                                \numprint{135792828381540616}\,z^{41} +
                                \numprint{75513548059989048}\,z^{42} +$\\ & & $
                                \numprint{39130117374538132}\,z^{43} +
                                \numprint{18872828052876618}\,z^{44} +
                                \numprint{8460284139138604}\,z^{45} +$\\ & & $
                                \numprint{3518912510054954}\,z^{46} +
                                \numprint{1355245912038020}\,z^{47} +
                                \numprint{482129585758940}\,z^{48} +$\\ & & $
                                \numprint{157983537865980}\,z^{49} +
                                \numprint{47524258972966}\,z^{50} +
                                \numprint{13073010514020}\,z^{51} +$\\ & & $
                                \numprint{3273341812692}\,z^{52} +
                                \numprint{741978339844}\,z^{53} +
                                \numprint{151266210264}\,z^{54} +
                                \numprint{27518246208}\,z^{55} +$\\ & & $
                                \numprint{4424188406}\,z^{56} +
                                \numprint{621078384}\,z^{57} +
                                \numprint{74967272}\,z^{58} +
                                \numprint{7624272}\,z^{59} +
                                \numprint{635372}\,z^{60} +
                                \numprint{41664}\,z^{61} +$\\ & & $
                                \numprint{2016}\,z^{62} +
                                \numprint{64}\,z^{63} +
                                z^{64}
           $
 \end{tabular}}
  \caption{Domination polynomials of the king graph $K_{n \times n}$.}
  \label{tab:nxn-king}
\end{table}

\bibliographystyle{jis}
\bibliography{math,mertens,cs}


\bigskip
\hrule
\bigskip

\noindent 2020 {\it Mathematics Subject Classification}:
Primary
  05C69   
  
Secondary
  05A15;  
  05C30,  
  11B83  
 
\noindent \emph{Keywords:}
Domination polynomial, grid graph, cylinder graph, torus graph, king
graph, dominating set, transfer matrix, algorithm

\bigskip
\hrule
\bigskip

\noindent (Concerned with sequences
\seqnum{A001333},
\seqnum{A078057},
\seqnum{A030270},
\seqnum{A075561},
\seqnum{A094087},
\seqnum{A104519},
\seqnum{A124696},
\seqnum{A133515},
\seqnum{A133791},
\seqnum{A208716},
\seqnum{A218354},
\seqnum{A218663},
\seqnum{A286514},
\seqnum{A303334},
\seqnum{A347554},
\seqnum{A347557},
\seqnum{A347632},
\seqnum{A350815},
\seqnum{A350820}
)

\end{document}